\newtheorem{theorem}{Theorem}[section]
\newtheorem{lemma}[theorem]{Lemma}
\newtheorem{proposition}{Proposition}
 \numberwithin{equation}{section}
\newtheorem{remark}{Remark}
\newcommand{\keywords}
\def\bc{\begin{center}}       \def\ec{\end{center}}
\def\ba{\begin{array}}        \def\ea{\end{array}}
\def\be{\begin{equation}}     \def\ee{\end{equation}}
\def\bea{\begin{eqnarray}}    \def\eea{\end{eqnarray}}
\def\beaa{\begin{eqnarray*}}  \def\eeaa{\end{eqnarray*}}
\def\mathbb{\Bbb}
\begin{document}

\title{\bf On a shadow system of the SKT competition system}
\author{Qi Wang \thanks{(Email:{\tt qwang@swufe.edu.cn}).  The author would like to thank the anonymous referee for his/her valuable comments and suggestions, which greatly improved the exposition of the paper.  This research is partially supported by the Fundamental Research Funds for the Central Universities, China.}\\
Department of Mathematics\\
Southwestern University of Finance and Economics\\
555 Liutai Ave, Wenjiang, Chengdu, Sichuan 611130, China
}

\date{}
\maketitle

%
%\title{\LARGE \bf On a nonlinear boundary value problem with integral constraint arising from species competitions}

%
%\date{}
%\maketitle

%\title[Nonlinear boundary value problem with integral constraint]{\LARGE \bf On a nonlinear boundary value problem with integral constraint arising from species competitions}
%
%
%\centerline{\scshape   }
%
%{\footnotesize
%% please put the address of the first author
% \centerline{Department of Mathematics }
%   \centerline{Southwestern University of Finance and Economics}
%   \centerline{555 Liutai Ave, Wenjiang, Chengdu, Sichuan 611130, China}
%} % Do not forget to end the {\footnotesize by the sign }

%
%\title{\LARGE \bf Qualitative analysis of stationary Keller-Segel chemotaxis models with logistic growth}
%\author{Qi Wang \thanks{Department of Mathematics, Southwestern University of Finance and Economics, Chengdu, Sichuan 611130, China
%        ({\tt qwang@swufe.edu.cn}).}}
%\date{}

\abstract
We study a boundary value problem with an integral constraint that arises from the modelings of species competition proposed by Lou and Ni in \cite{LN2}.  Through bifurcation theories, we obtain the existence of non-constant positive solutions of this problem, which are small perturbations from its positive constant solution, over a one-dimensional domain.  Moreover, we investigate the stability of these bifurcating solutions.  Finally, for the diffusion rate being sufficiently small, we construct infinitely many positive solutions with single transition layer, which is represented as an approximation of a step function.  The transition-layer solution can be used to model the segregation phenomenon through inter-specific competitions.

\textbf{Competition model, shadow system, nonlinear boundary value problem, transition layer.}

\section{Introduction}

In this paper, we consider the following one-dimensional nonlocal boundary value problem,
\begin{equation}\label{1}
\left\{
\begin{array}{ll}
\epsilon v''+(a_2-\frac{b_2 \lambda}{1+v}-c_2v)v=0,&x \in (0,L),     \\
v'(0)=v'(L)=0,\\
\int_0^L \frac{a_1-c_1v}{1+v}-\frac{b_1 \lambda}{(1+v)^2} dx=0,
\end{array}
\right.
\end{equation}
where $v=v_\epsilon(x)$ is a positive function and $\lambda=\lambda_\epsilon$ is a positive constant to be determined, while $ a_i, b_i, c_i$, $i=1,2$ and $\epsilon$ are some nonnegative constants.  %To save notations, we shall skip $\epsilon$ in both $v_\epsilon$ and $\lambda_\epsilon$ from now on.

The motivation for studying model (\ref{1}) is that it is a limiting system or the so-called shadow system of the following Lotka-Volterra competition model with $\Omega=(0,L)$,
\begin{equation}\label{2}
\left\{
\begin{array}{ll}
u_t=\Delta[(d_1+\rho_{12}v)u]+(a_1-b_1u-c_1v)u, &x \in \Omega,~t>0,     \\
v_t=\Delta[(d_2+\rho_{21}u)v]+(a_2-b_2u-c_2v)v,& x \in \Omega,~t>0, \\
u(x,0)=u_0(x) \geq 0,~ v(x,0)=v_0(x) \geq 0,& x\in \Omega,\\
\frac{\partial u}{\partial \textbf{n}}=\frac{\partial v}{\partial \textbf{n}}=0,& x \in \partial \Omega,~t>0,
\end{array}
\right.
\end{equation}
where $d_1$, $d_2$ and $\rho_{12},\rho_{21}$ are positive constants, $d_i$ is referred as the diffusion rate and $\rho_{ij}$ as the cross-diffusion rate.  System (\ref{2}) was proposed by Shigesada et al. \cite{SKT} in 1979 to study the phenomenon of species segregation, where $u$ and $v$ represent the population densities of two competing species.  A tremendous amount of work has been done on the dynamics of its positive solutions since the proposal of system(\ref{2}).  There are also various interesting results on its stationary problem that admits non-constant positive solutions, in particular over a one-dimensional domain.  See \cite{E}, \cite{IMNY}, \cite{KY}, \cite{LN}, \cite{LN2}, \cite{LNW}, \cite{MEF}, \cite{MK}, \cite{MM}, \cite{MNTT}, and the references therein.

Great progress was made by Lou, Ni in \cite{LN, LN2} in the existence and quantitative analysis of the steady states of (\ref{2}) for $\Omega$ being a bounded domain in $\mathbb{R}^N$, $1\leq N\leq 3$.  Roughly speaking, they showed that (\ref{2}) admits only trivial steady states if one of the diffusion rates is large with the corresponding cross-diffusion fixed, and (\ref{2}) allows nonconstant positive steady states if one of the cross-diffusion pressures is large with the corresponding diffusion rate being appropriately given.  Moreover, they established the limiting profiles of non-constant positive solutions of (\ref{2}) as $\rho_{12} \rightarrow \infty$ (and similarly as $\rho_{21} \rightarrow \infty$).  For the sake of simplicity, we only state their results for $\rho_{21}=0$, while the same analysis can be carried out for the case when $\rho_{21}\neq0$.  Moreover, we refer our readers to \cite{WX} and the references therein for recent developments in the analysis of the shadow systems to (\ref{2}).  Suppose that $\frac{a_1}{a_2}\neq \frac{b_1}{b_2} \neq \frac{c_1}{c_2}$ and $d_2\neq a_2/\mu_j$ for any $j\geq1$, where $\mu_j$ is the $j-$th eigenvalue of $-\Delta$ subject to homogenous Neumann boundary condition.  Let $(u_i,v_i)$ be positive nonconstant steady states of (\ref{2}) with $(d_1,\rho_{12})=(d_{1,i},\rho_{12,i})$.  Suppose that $\rho_{12,i}/d_{1,i} \rightarrow r\in(0,\infty)$ as $\rho_{12,i} \rightarrow \infty$, then $(u_i,\rho_{12,i}v_i/d_{1,i}) \rightarrow (\lambda/(1+v),v)$ uniformly on $[0,L]$ for some positive constant $\lambda0$ and $v$ is a positive solution to the following problem,
\begin{equation}\label{3}
\left\{
\begin{array}{ll}
d_2 \Delta v+(a_2-b_2\lambda/(1+v)-(c_2/r)v)v=0,& x \in \Omega,\\
\frac{\partial u}{\partial \textbf{n}}=\frac{\partial v}{\partial \textbf{n}}=0,& x \in \partial \Omega,\\
\int_\Omega  (a_1-c_1 v)/(1+v)dx =b_1 \lambda \int_\Omega 1/(1+v)^2dx.
\end{array}
\right.
\end{equation}
We now denote $d_2=\epsilon$ since the smallness of diffusion rate tends to create nonconstant solutions for (\ref{3}).  Putting $c_2/r=\tilde{c}_2$ and assuming $\Omega=(0,L)$, we arrive at (\ref{1}), where we have dropped the tilde over $c_2$ in (\ref{1}) without causing any confusion.

For $\frac{a_1}{a_2}>\frac{b_1}{b_2}$ and if $\epsilon>0$ is small, Lou and Ni \cite{LN2} established the existence of positive solutions $v_\epsilon(\lambda_\epsilon,x)$ to (\ref{1}) by degree theory.  Moreover, $v_\epsilon(x)$ has a single boundary spike at $x=0$ if $\epsilon$ being sufficiently small.  This paper is devoted to study the solutions of (\ref{1}) that have a different structure, i.e, an interior transition layer.  The remaining part of this paper is organized as follows.  In Section 2, we carry out bifurcation analysis to establish nonconstant positive solutions to (\ref{1}) for all $\epsilon$ small.  The stability of these small amplitude solutions are then determined in Section 3 for $b_1=0$ in (\ref{1}).  In Section 4, we show that for any $x_0$ in a pre-determined subinterval of $(0,L)$, there exists positive solutions to (\ref{1}) that have a single interior transition layer at $x_0$.   Finally, we include discussions and propose some interesting questions in Section 5.

\section{Nonconstant positive solutions to the shadow system}
In this section, we establish the existence of nonconstant positive solutions to (\ref{1}).  First of all, we apply the following conventional notations as in \cite{LN,LN2}
\[A=\frac{a_1}{a_2},B=\frac{b_1}{b_2},C=\frac{c_1}{c_2},\]
then we see that (\ref{1}) has a constant solution
\[(\bar{v},\bar{\lambda})=\left(\frac{a_2}{c_2} \frac{B-A}{B-C}, \frac{a_2}{b_2}\frac{A-C}{B-C}\Big(1+\frac{a_2}{c_2} \frac{B-A}{B-C}\Big)\right),\]
and $\bar v, \bar \lambda>0$ if and only if
\begin{equation}\label{4}
B>A>C,~\text{or}~B<A<C.
\end{equation}
\subsection{Existence of positive bifurcating solutions}
To obtain non-constant positive solutions of (\ref{1}), we are going to apply the local bifurcation theory due to Crandall and Rabinowtiz \cite{CR}, therefore we shall assume (\ref{4}) from now on.  Taking $\epsilon$ as the bifurcation parameter, we rewrite (\ref{1}) in the abstract form
\[\mathcal{F}(v,\lambda,\epsilon)=0,~(v,\lambda,\epsilon) \in \mathcal{X}  \times \mathbb{R}^+\times \mathbb{R}^+,  \]
where
\begin{equation}\label{5}
\mathcal{F}(v,\lambda,\epsilon) =\left(
 \begin{array}{c}
\epsilon v''+(a_2-\frac{b_2 \lambda}{1+v}-c_2v)v\\
\int_0^L \frac{a_1-c_1v}{1+v} dx- \int_0^L \frac{b_1 \lambda}{(1+v)^2} dx
 \end{array}
 \right),
 \end{equation}
and $\mathcal{X}$ is a Hilbert space defined by
\[\mathcal{X}=\{w \in H^2(0,L) ~\vert~~ w'(0)=w'(L)=0 \}.\]
We first collect the following facts about the operator $\mathcal{F}$ before using the bifurcation theory.

\begin{lemma}
The operator $\mathcal{F}(v,\lambda,\epsilon)$ defined in (\ref{5}) satisfies the following properties:

(1)~$\mathcal{F}(\bar{v},\bar{\lambda},\epsilon)=0$ for any $\epsilon \in \mathbb{R}^+$;

(2)~$\mathcal{F}: \mathcal{X} \times \mathbb{R}^+ \times \mathbb{R}^+ \rightarrow \mathcal{Y} \times \mathcal{Y}$ is analytic, where $\mathcal{Y}=L^2(0,L)$;

(3)~for any fixed $(v_0,\lambda_0) \in \mathcal{X} \times \mathbb{R}^+$, the Fr\'echet derivative of $\mathcal{F}$ is given by

\begin{equation}\label{6}
 D_{(v,\lambda)}\mathcal{F}(v_0,\lambda_0,\epsilon)(v,\lambda) =\left(
 \begin{array}{c}
\epsilon  v''+\left(a_2-\frac{b_2\lambda_0}{(1+v_0)^2}-2c_2v_0 \right)v- \frac{b_2v_0}{1+v_0}\lambda\\
\int_0^L\left(\frac{2b_1\lambda_0}{(1+v_0)^3}-\frac{a_1+c_1}{(1+v_0)^2} \right)v- \frac{b_1 \lambda}{(1+v_0)^2} dx
 \end{array}
 \right);
\end{equation}

 (4)~$D_{(v,\lambda)}\mathcal{F}(v_0,\lambda_0,\epsilon): \mathcal{X} \times \mathbb{R}^+ \rightarrow \mathcal{Y} \times \mathbb{R}$ is a Fredholm operator with zero index.
 \end{lemma}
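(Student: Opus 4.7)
Parts (1)--(3) are direct verifications; I will focus my plan on (4), which is the substantive assertion.

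For (1), the constant $(\bar v, \bar\lambda)$ is chosen precisely so that both algebraic identities $a_2 - b_2\bar\lambda/(1+\bar v) - c_2\bar v = 0$ and $(a_1 - c_1\bar v)(1+\bar v) = b_1\bar\lambda$ hold; since $\bar v'' \equiv 0$ the first component of $\mathcal{F}$ vanishes, and since the integrand of the third equation vanishes pointwise, the integral vanishes trivially for every $\epsilon$. For (2), on the open set where $v > -1$, each rational nonlinearity factors through the Sobolev embedding $\mathcal{X} \hookrightarrow C([0,L])$ followed by composition with the analytic scalar map $s \mapsto (1+s)^{-k}$, hence is analytic as a map to $L^\infty(0,L) \hookrightarrow \mathcal{Y}$; the remaining polynomial pieces are analytic as well. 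For (3), I would compute the G\^ateaux derivative direction-by-direction, interchanging $d/dt$ with the integral via dominated convergence on the compact interval $[0,L]$, obtain the formula in (\ref{6}), and upgrade G\^ateaux to Fr\'echet from the continuous dependence of the derivative on $(v_0,\lambda_0)$.

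For (4) my plan is to split
\[
L := D_{(v,\lambda)}\mathcal{F}(v_0,\lambda_0,\epsilon) = M + K,
\]
with principal part $M(v,\lambda) := (\epsilon v'', \lambda)$. Standard elliptic theory on $[0,L]$ with Neumann boundary conditions shows $\epsilon\partial_{xx}: \mathcal{X}\to\mathcal{Y}$ is Fredholm of index zero (one-dimensional kernel of constants, codimension-one range of zero-mean functions), while the identity on $\mathbb{R}$ is trivially an isomorphism, so $M:\mathcal{X}\times\mathbb{R} \to \mathcal{Y}\times\mathbb{R}$ has index zero. The perturbation $K$ collects (i) multiplication operators $v\mapsto \alpha(x)v$ from $\mathcal{X}$ to $\mathcal{Y}$ with $\alpha\in L^\infty(0,L)$ determined by $v_0,\lambda_0$, which are compact because $\mathcal{X}\hookrightarrow L^2$ is compact by Rellich--Kondrachov and $\alpha$ acts as a bounded multiplier on $L^2$; (ii) the finite-rank maps $\lambda\mapsto -(b_2 v_0/(1+v_0))\lambda$ from $\mathbb{R}$ to $\mathcal{Y}$ and $v\mapsto \int_0^L\gamma(x)v\,dx$ from $\mathcal{X}$ to $\mathbb{R}$; and (iii) a bounded scalar correction in the $\lambda$--$\lambda$ slot. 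Each summand is compact, so $L$ is a compact perturbation of $M$, and invariance of the Fredholm index under compact perturbation yields $\mathrm{ind}\,L = 0$.

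The only real difficulty I anticipate is organizational: because the scalar $\lambda$ is coupled to the function $v$ across both components of the codomain $\mathcal{Y}\times\mathbb{R}$ (which I take to be the correct codomain in (4), regarding the $\mathcal{Y}\times\mathcal{Y}$ in (2) as a typo), one must choose the splitting $L=M+K$ so that $M$ is a transparently Fredholm operator and $K$ is manifestly compact. Once the splitting is made, everything else reduces to Rellich--Kondrachov, finite-rank bounds, and the stability of the Fredholm index.
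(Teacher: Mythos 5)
Your proposal is correct, and it reaches the conclusion by a genuinely different splitting than the paper. The paper decomposes $D_{(v,\lambda)}\mathcal{F}(v_0,\lambda_0,\epsilon)=D\mathcal{F}_1+D\mathcal{F}_2$ with the zeroth-order multiplication term $\bigl(a_2-\tfrac{b_2\lambda_0}{(1+v_0)^2}-2c_2v_0\bigr)v$ kept inside the elliptic part $D\mathcal{F}_1$, and then invokes the Shi--Wang framework (strong ellipticity plus Agmon's condition, their Theorem 3.3 and Remark 3.4) to conclude that $D\mathcal{F}_1$ is Fredholm of index zero, before adding the remaining $\lambda$-coupling and integral terms as a compact perturbation and citing Kato for index stability. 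You instead strip the principal part down to $M(v,\lambda)=(\epsilon v'',\lambda)$, compute its index by hand (one-dimensional kernel of constants, range equal to the zero-mean functions, so index zero), and push the multiplication operator into the compact perturbation via the compact embedding $\mathcal{X}\hookrightarrow L^2(0,L)$ together with the finite-rank $\lambda$-terms and integral functional. Both routes are valid; yours is more elementary and self-contained in this one-dimensional scalar setting, needing only Rellich--Kondrachov and finite-rank arguments, while the paper's choice leans on machinery that works uniformly for quasilinear elliptic systems in higher dimensions and is the same Shi--Wang framework reused later for the global bifurcation, so the citation does double duty. Your reading of the codomain as $\mathcal{Y}\times\mathbb{R}$ in part (4) matches the paper's statement and is indeed the only framing in which the index-zero claim can hold; the only implicit hypothesis you share with the paper is that $1+v_0$ stays away from zero so that the coefficient functions lie in $L^\infty$, which is a matter of how the lemma is stated rather than a gap in your argument.
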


\begin{proof} Part (1)--(3) can be easily verified through direct calculations and we leave them to the reader.  To prove part (4), we formally decompose the derivative in (\ref{6}) as
\[ D_{(v,\lambda)}\mathcal{F}(v_0,\lambda_0,\epsilon)(v,\lambda)=D\mathcal{F}_1(v,\lambda)+D\mathcal{F}_2(v,\lambda),\]
where
\[D\mathcal{F}_1(v,\lambda)=\left(
 \begin{array}{c}
\epsilon  v''+\left( a_2-\frac{b_2\lambda_0}{(1+v_0)^2}-2c_2v_0 \right)v\\
0
 \end{array}
 \right),\]
and
\[D\mathcal{F}_2(v,\lambda)=\left(
 \begin{array}{c}
- \frac{b_2v_0}{1+v_0}\lambda\\
\int_0^L\left(\frac{2b_1\lambda_0}{(1+v_0)^3}-\frac{a_1+c_1}{(1+v_0)^2} \right)v- \frac{b_1 \lambda}{(1+v_0)^2} dx
 \end{array}
 \right).\]
Obviously $D\mathcal{F}_2: \mathcal{X} \times \mathbb{R}^+ \rightarrow \mathbb{R} \times \mathbb{R}$ is linear and compact.  On the other hand, $D\mathcal{F}_1$ is elliptic and according to Remark 2.5 of case 2, i.e, $N=1$, in Shi and Wang \cite{SW}, it is strongly elliptic and satisfies the Agmon's condition.  Furthermore, by Theorem 3.3 and Remark 3.4 of \cite{SW}, $D\mathcal{F}_1$ is a Fredholm operator with zero index.  Thus $D_{(v,\lambda)}\mathcal{F}(v_0,\lambda_0,\epsilon)$ is in the form of \emph{Fredholm operator+Compact operator}, and it follows from a well-known result, e.g, \cite{Ka}, that $D_{(v,\lambda)}\mathcal{F}(v_0,\lambda_0,\epsilon)$ is also a Fredholm operator with zero index.  Thus we have concluded the proof of this lemma.
\end{proof}

Putting $(v_0,\lambda_0)=(\bar{v},\bar{\lambda})$ in (\ref{6}), we have that
\begin{equation}\label{7}
 D_{(v,\lambda)}\mathcal{F}(\bar{v},\bar{\lambda},\epsilon)(v,\lambda) =\left(
 \begin{array}{c}
\epsilon  v''+\left(  \frac{a_2-c_2-2c_2 \bar{v}}{1+\bar{v}} \right)\bar{v}v- \frac{b_2\bar{v}}{1+\bar{v}}\lambda\\
\int_0^L\left( \frac{a_1-c_1-2c_1 \bar{v}}{(1+\bar{v})^2}  \right)v- \frac{b_1 \lambda}{(1+\bar{v})^2} dx
 \end{array}
 \right);
\end{equation}
To obtain candidates for bifurcation values, we need to check the following necessary condition on the null space of operator (\ref{7}),
\begin{equation}\label{8}
\mathcal{N}(D_{(v,\lambda)}\mathcal{F}(\bar{v},\bar{\lambda},\epsilon)) \neq \{ 0 \}.
\end{equation}
Let $(v,\lambda)$ be an element in this null space, then $(v,\lambda)$ satisfies the following system
\begin{equation}\label{9}
\left\{
\begin{array}{ll}
\epsilon  v''+\left( \frac{a_2-c_2-2c_2 \bar{v}}{1+\bar{v}} \right)\bar{v}v- \frac{b_2\bar{v}}{1+\bar{v}}\lambda=0,~x \in(0,L),\\
\int_0^L\left( \frac{a_1-c_1-2c_1 \bar{v}}{(1+\bar{v})^2}  \right)v- \frac{b_1 \lambda}{(1+\bar{v})^2} dx=0,\\
v'(0)=v'(L)=0.
\end{array}
\right.
\end{equation}
First of all, we claim that $\lambda=0$.  To this end, we integrate the first equation in (\ref{9}) over $0$ to $L$ and have that
\[ (a_2-c_2-2c_2 \bar{v} )\int_0^L v dx=b_2 \lambda L;\]
on the other hand, the second equation in (\ref{9}) is equivalent to
\[(a_1-c_1-2c_1 \bar{v}) \int_0^L v dx=b_1 \lambda L.\]
If $\lambda \neq 0$, we must have by equating the coefficients of the two identities above that
\[\bar{v}=\frac{B(a_2-c_2)-a_1+c_1}{2(B-C)c_2},\]
then by comparing this with the formula
\[\bar{v}=\frac{a_2}{c_2} \frac{B-A}{B-C},\]
we conclude from a straightforward calculation that $a_2(A-B)=c_2(B-C)$ and this implies that $\bar{v}=-1$ which is a contradiction.  Therefore $\lambda$ must be zero as claimed.

Now put $\lambda=0$ in (\ref{9}) and we arrive at
\begin{equation}\label{10}
\left\{
\begin{array}{ll}
\epsilon  v''+\left( \frac{a_2-c_2-2c_2 \bar{v}}{1+\bar{v}} \right)\bar{v}v=0,~x \in (0,L),\\
\left( \frac{a_1-c_1-2c_1 \bar{v}}{(1+\bar{v})^2} \right)\int_0^L v dx=0,~v'(0)=v'(L)=0.
\end{array}
\right.
\end{equation}
It is easy to see that (\ref{10}) has nonzero solutions if and only if $\frac{a_2-c_2-2c_2 \bar{v}}{(1+\bar{v})\epsilon}\bar{v}$ is one of the Neumann eigenvalues for $(0,L)$ and it gives rise to
\begin{equation}\label{11}
\frac{a_2-c_2-2c_2 \bar{v}}{(1+\bar{v})\epsilon}\bar{v}=(k\pi/L)^2,~ k \in N^+,
\end{equation}
which is coupled with an eigenfunction $v_k(x)=\cos(k\pi x/L)$.  Moreover we can easily see that the zero integral condition is obviously satisfied.
Then bifurcation might occur at $(\bar{v},\bar{\lambda},\epsilon_k)$ with
\begin{equation}\label{12}
\epsilon_k=\frac{a_2-c_2-2c_2 \bar{v}}{(1+\bar{v})(k\pi/L)^2}\bar{v},~ k \in N^+,
\end{equation}
provided that $\epsilon_k$ is positive or equivalently
\begin{equation}\label{13}
\bar{v}<\frac{a_2-c_2}{2c_2},~a_2>c_2.
\end{equation}
We have shown that the null space in (\ref{8}) is not trivial and in particular
\[\mathcal{N}\big(D_{(v,\lambda)}\mathcal{F}(\bar{v},\bar{\lambda},\epsilon)\big)=\text{span} \big\{\cos(k\pi x/L),0\big\}, k\in N^+.\]
\begin{remark}\label{rk2}
$(0,\frac{a_1}{b_1})$ is another trivial solution to (\ref{1}) and local bifurcation does not occur at $(0,\frac{a_1}{b_1})$.  Actually, putting $(v_0,\lambda_0)=(0,\frac{a_1}{b_1})$ in (\ref{6}), we see that the $v$-equation in (\ref{9}) becomes
\[\epsilon v''+(a_2-\frac{a_1b_2}{b_1})v=0,~v'(0)=v'(L)=0,\]
and the null-space $\mathcal{N}(D_{(v,\lambda)}\mathcal{F}(0,\frac{a_1}{b_1},\epsilon))$ must be trivial.
\end{remark}

Having the potential bifurcation values $\epsilon_k$ in (\ref{12}), we can now proceed to establish non-constant positive solutions for (\ref{1}) in the following theorem which guarantees that the local bifurcation occurs at $(\bar{v},\bar{\lambda},\epsilon_k)$.
\begin{theorem}\label{thm22} Assume that (\ref{4}) and (\ref{13}) hold.  Then for each $k\in N^+$, there exists $\delta>0$ and continuous functions
$s\in(-\delta, \delta):\rightarrow \epsilon_k(s) \in \mathbb{R}^+$ and
$s\in(-\delta, \delta):\rightarrow (v_k(s,x),\lambda_k(s)) \in \mathcal{X} \times \mathbb{R}^+$ such that
\begin{equation}\label{14}
\epsilon_k(0)=\epsilon_k,~(v_k(s,x),\lambda_k(s))=(\bar{v},\bar{\lambda})+s(\cos (k\pi x/L),0) +o(s),
\end{equation}
where $\epsilon_k$ is defined in (\ref{12}). Moreover, $(v_k(x,s),\lambda_k(s))$ solves system (\ref{1}) and all nontrivial solutions of (\ref{1}) near ($\bar{v},\bar{\lambda},\epsilon_k)$ are on the curve $\Gamma_k=(v_k(x,s),\lambda_k(s),\epsilon_k(s))$.
\end{theorem}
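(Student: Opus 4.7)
The plan is to apply the Crandall--Rabinowitz local bifurcation theorem (\cite{CR}) to $\mathcal{F}$ at the triple $(\bar v, \bar\lambda, \epsilon_k)$, with $\epsilon$ playing the role of the bifurcation parameter. The preceding lemma already supplies the background hypotheses: $\mathcal{F}$ is analytic, $(\bar v, \bar\lambda)$ is a trivial solution for every $\epsilon$, and $D_{(v,\lambda)}\mathcal{F}(\bar v, \bar\lambda, \epsilon_k)$ is Fredholm of index zero. What remains is to verify (i) the one-dimensionality of $\mathcal{N}(D_{(v,\lambda)}\mathcal{F}(\bar v, \bar\lambda, \epsilon_k))$ and (ii) the transversality condition on the mixed derivative $\partial_\epsilon D_{(v,\lambda)}\mathcal{F}$.

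For (i), the derivation leading from (\ref{7}) through (\ref{11}) already shows that any element $(v,\lambda)$ of the kernel must have $\lambda=0$, and that $v$ then solves the Neumann problem $\epsilon_k v'' + \frac{a_2-c_2-2c_2\bar v}{1+\bar v}\bar v\, v = 0$ on $(0,L)$. Since the Neumann eigenvalues $(k\pi/L)^2$ on an interval are simple with eigenfunction $\cos(k\pi x/L)$, the kernel is exactly $\mathrm{span}\{(\cos(k\pi x/L), 0)\}$, so it is one-dimensional; Fredholm index zero then gives codimension one for the range.

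The main obstacle is (ii), the transversality. A direct calculation yields
\[
\partial_\epsilon D_{(v,\lambda)}\mathcal{F}(\bar v, \bar\lambda, \epsilon_k)\bigl[\cos(k\pi x/L),\, 0\bigr]
= \Bigl(-(k\pi/L)^2\cos(k\pi x/L),\; 0\Bigr),
\]
and the task is to show this element does not lie in $\mathcal{R}(D_{(v,\lambda)}\mathcal{F}(\bar v, \bar\lambda, \epsilon_k))$. I would argue by contradiction: if some $(v,\lambda) \in \mathcal{X} \times \mathbb{R}$ satisfied the first component equation
\[
\epsilon_k v'' + \alpha v - \beta \lambda = -(k\pi/L)^2 \cos(k\pi x/L),
\qquad \alpha := \tfrac{(a_2-c_2-2c_2\bar v)\bar v}{1+\bar v},\;\; \beta := \tfrac{b_2 \bar v}{1+\bar v},
\]
then testing against $\cos(k\pi x/L)$ and integrating by parts twice using $v'(0)=v'(L)=0$ converts $\int_0^L v''\cos(k\pi x/L)\,dx$ into $-(k\pi/L)^2\int_0^L v\cos(k\pi x/L)\,dx$. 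Since $\int_0^L \cos(k\pi x/L)\,dx = 0$ for $k \geq 1$, the $\beta\lambda$ term drops out, and the left-hand side collapses to $\bigl(-\epsilon_k(k\pi/L)^2 + \alpha\bigr)\int_0^L v\cos(k\pi x/L)\,dx$, which vanishes identically thanks to the defining relation (\ref{11}) that gives $\alpha = \epsilon_k(k\pi/L)^2$. The right-hand side equals $-(k\pi/L)^2 \cdot L/2 \neq 0$, delivering the required contradiction.

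With all Crandall--Rabinowitz hypotheses verified, the theorem yields the bifurcation curve $\Gamma_k = (v_k(x,s), \lambda_k(s), \epsilon_k(s))$ together with the asymptotic expansion (\ref{14}), and describes every nontrivial solution near $(\bar v,\bar\lambda,\epsilon_k)$. Positivity of $v_k(s,\cdot)$ and $\lambda_k(s)$ for small $|s|$ follows from continuity, since $\bar v > 0$ and $\bar \lambda > 0$ under (\ref{4}); shrinking $\delta$ if necessary keeps $(v_k(s,\cdot), \lambda_k(s))$ inside $\mathcal{X} \times \mathbb{R}^+$, which completes the proof.
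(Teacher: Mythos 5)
Your proposal is correct and follows essentially the same route as the paper: an application of the Crandall--Rabinowitz theorem with the kernel identified as $\mathrm{span}\{(\cos(k\pi x/L),0)\}$ and transversality verified by testing against $\cos(k\pi x/L)$, where the relation $\alpha=\epsilon_k(k\pi/L)^2$ from (\ref{11}) forces the contradiction. The only (harmless) difference is that you bypass the paper's intermediate step of first using the integral component to show $\lambda=0$, since in your test the $\beta\lambda$ term drops out anyway because $\int_0^L\cos(k\pi x/L)\,dx=0$.
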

\begin{proof}
To make use of the local bifurcation theory of Crandall and Rabinowtiz \cite{CR}, we have verified all but the following \textit{transversality~condition}:
\begin{equation}\label{15}
\frac{d}{d \epsilon} \left(D_{(v,\lambda)}\mathcal{F}(\bar{v},\bar{\lambda},\epsilon)\right)(v_k,\lambda_k)\Big\vert_{\epsilon=\epsilon_k} \notin \mathcal{R}(D_{(v,\lambda)}\mathcal{F}(\bar{v},\bar{\lambda},\epsilon_k)),
\end{equation}
where
\[\frac{d}{d \epsilon} \left(D_{(v,\lambda)}\mathcal{F}(\bar{v},\bar{\lambda},\epsilon)\right)(v_k,\lambda_k)\Big\vert_{\epsilon=\epsilon_k}=\left(
 \begin{array}{c}
(\cos\frac{k\pi x}{L})''\\
0 \end{array}
 \right). \]
If not and (\ref{15}) fails, then there exists a nontrivial solution $(v,\lambda) \in \mathcal{X} \times \mathbb{R}^+$ that satisfies the following problem
\begin{equation}\label{16}
\left\{
\begin{array}{ll}
\epsilon  v''+\left( \frac{a_2-c_2-2c_2 \bar{v}}{1+\bar{v}} \right)\bar{v}v- \frac{b_2\bar{v}}{1+\bar{v}}\lambda=(\cos\frac{k\pi x}{L})'',~x\in (0,L),\\
\int_0^L\left( \frac{a_1-c_1-2c_1 \bar{v}}{(1+\bar{v})^2}  \right)v- \frac{b_1 \lambda}{(1+\bar{v})^2} dx=0,\\
v'(0)=v'(L)=0.
\end{array}
\right.
\end{equation}
By the same analysis that leads to the claim below (\ref{9}), we can show that $\lambda=0$ in (\ref{16}), which then becomes
\begin{equation}\label{17}
\left\{
\begin{array}{ll}
\epsilon_k v''+\left( \frac{a_2-c_2-2c_2 \bar{v}}{1+\bar{v}} \right)\bar{v}v=(\cos\frac{k\pi x}{L})'',~ x\in (0,L),\\
v'(0)=v'(L)=0.
\end{array}
\right.
\end{equation}
However, this reaches a contradiction to the Fredholm Alternative since $\cos\frac{k\pi x}{L}$ is in the kernel of the operator on the left hand side of (\ref{17}).  Hence we have proved the transversality condition and this concludes the proof of Theorem \ref{thm22}.
\end{proof}

\subsection{Global bifurcation analysis}

We now proceed to extend the local bifurcation curves obtained in Theorem \ref{thm22} by the global bifurcation theory of Rabinowitz in its version developed by Shi and Wang in \cite{SW}.  In particular, we shall study the first bifurcation branch $\Gamma_1$.

\begin{theorem}\label{thm31}Assume that (\ref{4}) and (\ref{13}) hold.  Then there exists a component $\mathcal{S} \subset \mathcal{X}  \times  \mathbb{R}^+ \times  \mathbb{R}^+$ that satisfies,

(i) $\mathcal{S}$ contains $(v_1(x,s),\lambda_1(s),\epsilon_1(s)), s \in (-\delta, \delta)$;

(ii) $\forall (v,\lambda,\epsilon) \in  \mathcal{S}$, $v(x)>0$ on $[0,L]$, $\lambda>0$ and $(v,\lambda)$ is a solution of (\ref{1});

(iii) $\mathcal{S}=\mathcal{S}_u \cup\mathcal{S}_l$ with $\mathcal{S}_u \cap \mathcal{S}_l=(\bar{v},\bar{\lambda},\epsilon_1)$ such that for all small $\epsilon >0$, $\mathcal{S}_u\backslash(\bar{v},\bar{\lambda},\epsilon_1)$ consists of $(v,\lambda,\epsilon)$ with $v'(x)>0$ on $(0,L)$ and $\mathcal{S}_l \backslash(\bar{v},\bar{\lambda},\epsilon_1)$ consists of $(v,\lambda,\epsilon)$ with $v'(x)<0$ on $(0,L)$, where
\[\epsilon_1=\frac{a_2-c_2-2c_2\bar v}{(1+\bar v)(\pi/L)^2};\]

(iv) $\forall \epsilon \in (0,\epsilon_1)$, there exists $(v,\lambda,\epsilon) \in \mathcal{S}_u$ and the same holds for $\mathcal{S}_l$.

\end{theorem}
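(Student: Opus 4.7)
The plan is to apply the global bifurcation theorem of Shi and Wang \cite{SW}, a Fredholm-operator extension of the Rabinowitz alternative, to the local curve $\Gamma_1$ furnished by Theorem~\ref{thm22}. This yields a connected component $\mathcal{S}\subset \mathcal{X}\times\mathbb{R}^+\times\mathbb{R}^+$ of the closure of the nontrivial solutions of $\mathcal{F}(v,\lambda,\epsilon)=0$ containing $\Gamma_1$ and satisfying the standard trichotomy: either (a) $\mathcal{S}$ is unbounded, (b) $\mathcal{S}$ meets another bifurcation point $(\bar v,\bar\lambda,\epsilon_k)$ with $k\ge 2$, or (c) $\mathcal{S}$ reaches the boundary of the admissible domain, i.e. $\epsilon\to 0^+$, $\lambda\to 0^+$, $\inf v\to 0$, or $\|v\|_\infty\to\infty$. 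Part (i) then holds by construction.

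For part (iii), the expansion (\ref{14}) at $k=1$ gives $v_1'(s,x)<0$ on $(0,L)$ for $s>0$ small and $v_1'(s,x)>0$ for $s<0$ small; I define $\mathcal{S}_u$ and $\mathcal{S}_l$ as the maximal connected subsets of $\mathcal{S}$ containing these two local half-branches. Setting $w=v'$ and differentiating the first equation of (\ref{1}) yields a linear Sturm--Liouville equation $\epsilon w''+q(x)w=0$ on $(0,L)$ with Dirichlet data $w(0)=w(L)=0$, where $q(x)=a_2-b_2\lambda/(1+v)^2-2c_2 v$. An interior point where $w$ and $w'$ both vanish would force $w\equiv 0$ by ODE uniqueness, so along the continuous deformation of $\mathcal{S}_u$ (resp. $\mathcal{S}_l$), the strict sign of $w$ can only be lost by $w$ vanishing identically, in which case $v\equiv\bar v$ and the branch passes through some $(\bar v,\bar\lambda,\epsilon_k)$. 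Since the local Crandall--Rabinowitz solutions at $\epsilon_k$ inherit the nodal pattern of $\cos(k\pi x/L)$, monotonicity is incompatible with $k\ge 2$, so strict monotonicity persists away from $(\bar v,\bar\lambda,\epsilon_1)$. For part (ii), positivity of $v$ follows by a strong maximum principle and Hopf continuation argument applied to the first equation of (\ref{1}): if $v$ first touched zero along $\mathcal{S}$, uniqueness would force $v\equiv 0$, contradicting continuity from $\bar v>0$. Positivity of $\lambda$ follows from its continuity along $\mathcal{S}$ and the integral constraint, which together with the $L^\infty$ control on $v$ keeps $\lambda$ away from zero under (\ref{4}).

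For part (iv), I rule out alternatives (a), (b), and the $v,\lambda$--boundary in (c), forcing $\mathcal{S}_u$ to extend down to $\epsilon=0$. A priori bounds come from evaluating the first equation of (\ref{1}) at interior extrema of $v$: at a maximum $x_M$ one gets $v(x_M)\le a_2/c_2$, and at a minimum a matching lower bound, so $\|v\|_\infty$ is bounded on $\mathcal{S}$. The integral identity
\[
b_1\lambda \int_0^L \frac{dx}{(1+v)^2}=\int_0^L \frac{a_1-c_1v}{1+v}\,dx
\]
then gives two-sided bounds on $\lambda$. These estimates, combined with elliptic regularity applied to the first equation of (\ref{1}), preclude alternative (a). The monotonicity from (iii) precludes $\mathcal{S}_u$ meeting $(\bar v,\bar\lambda,\epsilon_k)$ for $k\ge 2$, excluding (b). Therefore $\mathcal{S}_u$ must approach $\epsilon=0$, and since $\pi_\epsilon(\mathcal{S}_u)$ is connected and contains $\epsilon_1$ in its closure, it covers the whole interval $(0,\epsilon_1)$. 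The same argument applies to $\mathcal{S}_l$.

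The main obstacle will be making the persistence-of-monotonicity argument fully rigorous along the entire global component: one must exclude not only returns to constant solutions at the higher bifurcation values $\epsilon_k$, $k\ge 2$, but also the possibility that $v'$ develops an interior zero by a limiting process within the positive cone. Handling this requires combining the Sturm-type ODE for $w=v'$ with a careful continuity-of-Morse-index analysis at each potential branch point, in the infinite-dimensional Fredholm framework of \cite{SW}.
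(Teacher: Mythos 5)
Your overall strategy (Shi--Wang global bifurcation from $\Gamma_1$, monotonicity via the differentiated equation for $w=v'$, exclusion of the higher bifurcation points by nodal structure) is the same as the paper's, but there are genuine gaps in the way you force the branch down to all $\epsilon\in(0,\epsilon_1)$. First, your claim that the $L^\infty$ bounds on $v$ and $\lambda$ ``combined with elliptic regularity'' preclude the unboundedness alternative is not tenable: the equation is $\epsilon v''+f(v,\lambda)=0$, so the $H^2$ (i.e.\ $\mathcal{X}$) estimates degenerate like $1/\epsilon$ as $\epsilon\to0$, and in fact the component \emph{is} non-compact/unbounded --- that is precisely the alternative that occurs in the paper ($(a1)$ of Shi--Wang), realized through $\epsilon\to0^+$. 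Second, and more importantly, nothing in your argument prevents the component from escaping through $\epsilon\to+\infty$ with $v,\lambda$ bounded, in which case (iv) would fail. The paper closes this hole with a separate nonexistence result for large $\epsilon$: writing $v=\bar v_{\mathrm{ave}}+w$ with $\int_0^L w\,dx=0$, multiplying by $w$ and using the Poincar\'e inequality with constant $(\pi/L)^2$ shows that for $\epsilon$ larger than an explicit threshold the only positive solution is $(\bar v,\bar\lambda)$; only then does non-compactness, together with uniform $C^3$ bounds on any $\epsilon$-interval bounded away from $0$, force the projection of $\mathcal{S}_u$ onto the $\epsilon$-axis to be of the form $(0,\bar\epsilon]$. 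You need both of these steps, and neither appears in your proposal.

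Two further points are stated too loosely to count as proofs. For positivity in (ii), ``if $v$ first touched zero, uniqueness forces $v\equiv0$, contradicting continuity from $\bar v>0$'' is not a contradiction by itself: a connected set can in principle reach the trivial branch. The paper's closedness argument uses that a touching point forces $(v,\lambda)\equiv(0,a_1/b_1)$ and then invokes the fact (Remark 2) that the linearization at $(0,a_1/b_1)$ has trivial kernel, so nontrivial solutions cannot accumulate there; similarly, $\lambda>0$ is proved by analyzing the limit $\lambda=0$, where $v$ must be $0$ or $a_2/c_2$, and the constraint then yields $A=C$, contradicting (\ref{4}). Your appeal to ``the integral constraint keeps $\lambda$ away from zero'' skips exactly this case analysis. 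Finally, if you work in the Shi--Wang framework you must also exclude the alternative that the component contains a point $(\bar v+\hat v,\bar\lambda+\hat\lambda,\epsilon)$ with $(\hat v,\hat\lambda)$ in a closed complement $\mathcal{Z}$ of the kernel; the paper does this by choosing $\mathcal{Z}=\{\int_0^L v\cos(\pi x/L)\,dx=0\}$ and noting that for a monotone increasing $v$, integration by parts gives $\int_0^L v\cos(\pi x/L)\,dx=-\frac{L}{\pi}\int_0^L v'\sin(\pi x/L)\,dx<0$. Your monotonicity discussion (and the acknowledged ``Morse-index'' obstacle) can be replaced by the paper's simpler open--closed argument using Hopf's lemma and the strong maximum principle on the equation for $v'$.
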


\begin{proof}
Denote the solution set of (\ref{1}) by
\[\mathcal{D}=\{(v,\lambda,\epsilon) \in \mathcal{X} \times \mathbb{R}^+ \times \mathbb{R}^+ ~\vert~ \mathcal{F}(v,\lambda,\epsilon)=0, (v,\lambda) \neq (\bar{v},\bar{\lambda})\}\] and choose $\mathcal{S}$ to be the maximal connected subset of $\bar{\mathcal{D}}$ that contains $(\bar{v},\bar{\lambda},\epsilon_1)$.  Then $\mathcal{S}$ is the desired closed set and $(i)$ follows directly from (\ref{14}) in Theorem \ref{thm22}.

To prove that $v(x)$ is positive on $[0,L]$ and $\lambda$ is positive for all $(v,\lambda,\epsilon)\in\Psi$ with $\epsilon>0$, we introduce the following two connected sets:
\[\mathcal{S}^+=\mathcal{S}\cap \{\epsilon>0\},\]
and
\[\mathcal{P}_0=\{(v,\lambda) \in \mathcal{S}^+ ~\vert~ v(x)>0,~x\in[0,L],~\lambda>0\},\]
then we want to show that $\mathcal{P}_0=\mathcal{S}^+$.  First of all, we observe that $\mathcal{P}_0$ is a subset of $\mathcal{S}^+$ and $\mathcal{P}_0$ is nonempty, since at least the part of $\mathcal{S}$ near $(\bar{v},\bar{\lambda},\epsilon_1)$ is contained in $\mathcal{P}_0$.  Now we prove that $\mathcal{P}_0$ is both open and closed in $\mathcal{S}^+$.  The openness is trivial, since for any $(v,\lambda,\epsilon) \in \mathcal{P}_0$ and the sequence $(v_k,\lambda_k,\epsilon_k)$ that converges in $(v,\lambda,\epsilon)$ in $\mathcal{X}\times \mathbb{R}^+ \times\mathbb{R}^+$, we must have that $v_k$ converges to $v$ in $C^2([0,L])$, therefore $v_k>0$ on $[0,L]$ since $v>0$ on $[0,L]$.  Furthermore, the fact that $\lambda_k>0$ and $\epsilon_k>0$ follows readily from $\lambda>0$ and $\epsilon>0$.

Now we show that $\mathcal{P}_0$ is closed in $\mathcal{S}^+$.  Take $\{(v_k,\lambda_k,\epsilon_k)\} \in \mathcal{P}_0$ such that $(v_k,\lambda_k,\epsilon_k) \rightarrow (v,\lambda,\epsilon) \text{ in the norm of }\mathcal{X}\times \mathbb{R}^+ \times\mathbb{R}^+$ for some $(v,\lambda,\epsilon) \in \mathcal{S}^+.$  We want to show that $(v,\lambda,\epsilon) \in \mathcal{P}_0$, i.e $v(x)>0$ on $[0,L]$ and $\lambda>0$.  Obviously we have $v(x)\geq0$ and $\lambda\geq0$.  Now we show that $\lambda\neq 0$ and $v(x)>0$ for all $x_0\in [0,L]$.  We argue by contradiction.

If $\lambda=0$, the $v(x)$--equation in (\ref{1}) becomes
\begin{equation}\label{s12}
\left\{
\begin{array}{ll}
\epsilon v''+(a_2-c_2v)v=0, x\in (0,L),\\
v'(0)=v'(L)=0.
\end{array}
\right.
\end{equation}
It is well-known that (\ref{s12}) has only trivial solution, i.e, $v\equiv0$, or $v\equiv \frac{a_2}{c_2}$, hence $v_k$ converges to either 0 or $\frac{a_2}{c_2}$ uniformly on $[0,L]$.  The case that $v_k$ converges to 0 can be treated by the same analysis that shows $v_k(x)>0$.  If $v_k$ converges to $\frac{a_2}{c_2}$, we apply the Lebesgue's Dominated Convergence Theorem to the integral constraint in (\ref{1}) and send $\lambda_k \rightarrow 0$, then we have that
\[0=\lim_{k\rightarrow \infty} \int_0^L \frac{1}{1+v_k}\left(a_1-\frac{b_1\lambda_k}{1+v_k}-c_1v_k\right) dx=\frac{a_2c_2L}{a_2+c_2}(A-C),\]
and it implies that $A=C$, however this is a contradiction to (\ref{4}).  Therefore $\lambda$ must be positive as desired.  On the other hand, it is easy to see that $v(x)\geq0$ on $[0,L]$ and if $v(x_0)=0$ for some $x_0\in[0,L]$.  We apply the Strong Maximum principle and Hopf's lemma to (\ref{1}) and have that $v\equiv 0$ for all $x\in [0,L]$ and $\lambda=\frac{a_1}{b_1}$.  However, we have from Remark \ref{rk2} that bifurcation does not occur at $(0,\frac{a_1}{b_1})$.  This is a contraction and we must have that $v(x)>0$ on $[0,L]$.

To prove $(iii)$, we choose $\mathcal{S}_u$ to be the component of $\mathcal{S} \backslash \{(v_1(s,x),\lambda_1(s),\epsilon_1(s)) \vert s \in(-\delta,0) \}$ containing $ \{(v_1(s,x),\lambda_1(s),\epsilon_1(s)) \vert s\in [0 ,\delta)  \}$
and correspondingly $\mathcal{S}_l=\Psi \backslash \{(v_1(s,x),\lambda_1(s),\epsilon_1(s)) \vert s \in (0,\delta)\} $ containing $\{(v_1(s,x),\lambda_1(s),\epsilon_1(s)) \vert s\in (-\delta,0]\}$, then we can readily see that $\mathcal{S}=\mathcal{S}_u \cup \mathcal{S}_l$, $\mathcal{S}_u \cap \mathcal{S}_l=\{(v_1,\lambda_1,\epsilon_1)\}$.  Moreover, we introduce the following four subsets:
\[\mathcal{S}_u^0:(\mathcal{S}_u \backslash \{(\bar{v},\bar{\lambda},\epsilon_1)\}) \cap \{\epsilon >0\} ,~~\mathcal{P}_1^+:\{(v,\lambda) \in \mathcal{X} \times \mathbb{R}^+~\vert~v(x),~v'(x)>0, x\in(0,L)\},\]
\[\mathcal{S}_l^0: (\mathcal{S}_l \backslash \{(\bar{v},\bar{\lambda},\epsilon_1\})\cap \{\epsilon >0\},~~\mathcal{P}_1^-:\{(v,\lambda)\in  \mathcal{X} \times \mathbb{R}^+~\vert~ v(x)>0,~v'(x)<0, x\in(0,L)\},\]
and we want to show that
\[\mathcal{S}_u^0 \subset \mathcal{P}_1^+ \times \mathbb{R}^+,~\mathcal{S}_l^0 \subset \mathcal{P}_1^- \times \mathbb{R}^+. \]
We shall only prove the first part, while the latter one can be treated in the same way.  We first note that $\mathcal{S}_u^0 \neq \emptyset$ since any solution $(v,\lambda,\epsilon)$ of (\ref{1}) near $(\bar{v},\bar{\lambda},\epsilon_1)$ is in the set $\mathcal{S}_u^0 $ thanks to (\ref{14}).  Now that $\mathcal{S}_u^0$ is a connected subset of $ \mathcal{X}^+ \times \mathbb{R}^+   \times \mathbb{R}^+$, we only need to show that $\mathcal{S}_u^0 \cap (\mathcal{P}_1^+ \times \mathbb{R}^+)$ is both open and closed with respect to the topology of $\mathcal{S}_u^0 $ and we divide our proof into two parts.

\textbf{\emph{Openness}}:  Assume that $(\tilde{v}, \tilde{\lambda}, \tilde{\epsilon}) \in \mathcal{S}_u^0 \cap (\mathcal{P}_1^+ \times \mathbb{R}^+)$ and there exists a sequence
$\{(\tilde{v}_k, \tilde{\lambda}_k, \tilde{\epsilon}_k) \}_{k=1}^\infty$  in $\mathcal{S}_u^0$ that converges to $(\tilde{v}, \tilde{\lambda}, \tilde{\epsilon})$ in the norm of $\mathcal{X} \times \mathbb{R}^+ \times \mathbb{R}^+$.  We want to show that for all $k$ large that
$(\tilde{v}_k, \tilde{\lambda}_k, \tilde{\epsilon}_k) \in  \mathcal{P}_1^+ \times \mathbb{R}^+$, i.e,

\[\tilde {v}_k'>0,x\in (0,L),~\tilde{\lambda}_k>0~,\tilde{\epsilon}_k>0.\]
First of all, it is easy to see that $\tilde{\lambda}_k>0~,\tilde{\epsilon}_k>0$ since both have positive limits as $\tilde{\lambda}>0~,\tilde{\epsilon}>0$.  On the other hand, we conclude from $\tilde{v}_k \rightarrow \tilde{v}$ in $\mathcal{X}$ and the elliptic regularity theory that $ \tilde{v}_k \rightarrow  \tilde{v} \text{~in~} C^2([0,L])$.  Differentiate the first equation in (\ref{1}) and we have
\begin{equation}\label{s13}
\left\{
\begin{array}{ll}
-\epsilon \tilde{v}'_{xx}+(2c_2\tilde{v}+\frac{b_2\lambda}{(1+\tilde v)^2}-a_2)\tilde{v}'=0, x \in (0,L),\\
\tilde{v}'(0)=\tilde{v}'(L)=0.
\end{array}
\right.
\end{equation}
We have from Hopf's lemma and the fact $\tilde{v}'(x)>0$ that $\tilde{v}''(L)>0>\tilde{v}''(0)$, then this second order non-degeneracy implies that $\tilde{v}'_k(x)>0$, which is desired.

\textbf{\emph{Closedness}}:  To show that $\mathcal{S}_u^0 \cap (\mathcal{P}_1^+ \times \mathbb{R}^+)$ is closed in $\mathcal{S}_u^0$, we take a sequence $(\tilde{v}_k, \tilde{\lambda}_k, \tilde{\epsilon}_k) \in \mathcal{S}_u^0 \cap (\mathcal{P}_1^+ \times \mathbb{R}^+)$ and assume that there exists $(\tilde{v}, \tilde{\lambda}, \tilde{\epsilon})$ in $\mathcal{S}_u^0$ such that $(\tilde{v}_k, \tilde{\lambda}_k, \tilde{\epsilon}_k)  \rightarrow (\tilde{v}, \tilde{\lambda}, \tilde{\epsilon})$ in the topology of $\mathcal{X} \times \mathbb{R}^+ \times \mathbb{R}^+.$  We want to show that \[\tilde{v}'(x)>0, x\in(0,L),\text{ and }\tilde{\lambda}>0.\]
$\tilde{\lambda}>0$ can be easily proved by the same argument as above and we now need to show that $\tilde{v}'(x)>0$.  Again we have from the elliptic regularity that $\tilde{v}_k  \rightarrow  \tilde{v}$ in $C^2([0,L])$,
therefore $\tilde{v}'(x)\geq 0, x\in(0,L)$.  Applying the Strong Maximum Principle and Hopf's Lemma to (\ref{s13}), we have that either $\tilde{v}'>0$ or $\tilde{v}'\equiv0$ on $(0,L)$.  In the latter case, we must have $(\tilde{v},\tilde{\lambda}) \equiv (\bar{v},\bar{\lambda})$ and this contradicts to the definition of $\mathcal{S}_u^0$.  Thus we have shown that $\tilde v'>0$ on $(0,L)$ and this finishes the proof of (iii).

According to Theorem 4.4 in Shi and Wang \cite{SW}, $\mathcal{S}_{u}$ satisfies one of the following alternatives: $(a1)$  it is not compact in $\mathcal{X}  \times \mathbb{R}^+ \times \mathbb{R}^+$; $(a2)$ it contains a point $(\bar{v},\bar{\lambda},\epsilon_*)$ with $\epsilon_* \neq \epsilon_1$; $(a3)$ it contains a point $(\bar{v}+\hat{v},\bar{\lambda}+\hat{\lambda},\epsilon)$ where $0\neq (\hat{v},\hat{\lambda}) \in \mathcal{Z}$, and $\mathcal{Z}$ is a closed complement of $\mathcal{N}\left(D_{(v,\lambda)}\mathcal{F}(\bar{v},\bar{\lambda},\epsilon_1) \right)=\text{span}\{ (\cos \frac{\pi x}{L},0)\}$.

If $(a2)$ occurs, then $\epsilon_*$ must be one of the bifurcation values $\epsilon_k$, $k\geq2$ and in a neighborhood of $(\bar{v},\bar{\lambda})$, $v(x)$ has the formula
$v(x)=\bar{v}+s \cos\frac{k\pi x}{L}+o(s)$ according to (\ref{14}).  This contradicts to the monotonicity of $v(x)$ and thus $(a2)$ can not happen.

If $(a3)$ occurs, we can choose the complement to be
\[\mathcal{Z}=\Big\{(v,\lambda)\in \mathcal{X}  \times \mathbb{R}^+ ~\Big \vert \int_0^L v(x)\cos \frac{\pi x}{L}dx =0 \Big\},  \]
However, for any $(v,\lambda) \in \mathcal{Z}$, we have from the integration by parts that
\[0=\int_0^L v(x)\cos \frac{\pi x}{L}  dx =-\frac{L}{\pi} \int_0^L v'(x)  \sin \frac{\pi x}{L} dx<0,\]
and this is also a contradiction.  Therefore we have shown that only alternative $(a1)$ occurs and $\mathcal{S}_{u}$ is not compact in $\mathcal{X}  \times \mathbb{R}^+ \times \mathbb{R}^+$.  Now we will study the behavior of $\mathcal{S}_u$ and that of $\mathcal{S}_l$ can be obtained in the exact same way. First of all, we claim that the project of $\mathcal{S}_u$ onto the $\epsilon$-axis does not contain an interval in the form $(\epsilon_0,\infty)$ for any $\epsilon_0>0$ and it is sufficient to show that there exist a positive constant $\bar \epsilon_0$ such that (\ref{1}) has only constant positive solution $(\bar{v},\bar{\lambda})$ if $\epsilon \in (\bar \epsilon_0,\infty)$.  To prove the claim, we decompose solution $v(x)$ of (\ref{1}) as
\[v=\bar{v}_{\text{ave}}+w,\]
where $\bar{v}_{\text{ave}}=\frac{1}{L} \int_0^L vdx$ and $\int_0^L w dx=0$.  Then we readily see that $w$ satisfies
\begin{equation}\label{s14a}
\left\{
\begin{array}{ll}
\epsilon  w''+(a_2-\frac{b_2 \lambda}{1+\bar v+w}-c_2 \bar v-c_2 w )(\bar v+w)=0,~x \in (0,L),\\
\int_0^L w dx =0,~ w'(0)=w'(L)=0.
\end{array}
\right.
\end{equation}
Multiplying both hand sides of (\ref{s14a}) by $w$ and then integrating over $(0,L)$, we have that
\[\epsilon \int_0^L (w')^2 dx=(a_2-2c_2\bar{v})\int_0^L w^2 dx-b_2\lambda \int_0^L \frac{(\bar v+w)w}{1+\bar v+w}dx-c_2\int_0^L w^3dx.\]
We can easily show from the Maximum Principle that both $v(x)$ and $\lambda$ are uniformly bounded in $\epsilon$, then we have from the inequality above that
\[\epsilon \int_0^L (w')^2 dx \leq C_0(a_2,c_2,\bar{v})\int_0^L w^2 dx.\]
Then we reach a contradiction for all $\epsilon>\frac{C_0}{(\pi/L)^2}$ unless $w\equiv0$, where $(\pi/L)^2$ is the first positive eigenvalue of $-\frac{d^2}{dx^2}$ subject to Neumann boundary condition.  If $w\equiv0$, we have that $v\equiv \bar{v}$ and this is a contradiction as we have shown in the case $(a2)$.  Therefore the claim is proved.

Now we proceed to show that the projection $\mathcal{S}_u$ onto the $\epsilon$-axis is of the form $(0,\bar{\epsilon}]$ for some $\bar{\epsilon} \geq \epsilon_1$.  We argue by contradiction and assume that there exists $\underline \epsilon >0$ such that $(\underline \epsilon ,\bar{\epsilon})$ is contained in this projection, but this projection does not contain any $\epsilon<\underline  \epsilon $.  Then we have from the uniformly boundedness of $\Vert v_\epsilon(x)\Vert_\infty$ and sobolev embedding that, for each $\epsilon>0$, $\Vert v_\epsilon \Vert_{C^3([0,L])} \leq C$, $\forall(v_\epsilon,\lambda_\epsilon,\epsilon) \in \mathcal{S}_u$ and this implies that $\mathcal{S}_u$ is compact in $\mathcal{X} \times \mathbb{R}^+ \times \mathbb{R}^+$.  We reach a contradiction to alternative $(a1)$.  Therefore $\mathcal{S}_u$ extends to infinity vertically in $\mathcal{X}  \times \mathbb{R}^+ \times \mathbb{R}^+$.  This finishes the proof of (iii) and Theorem \ref{thm31}.
\end{proof}

We have from Theorem \ref{thm31} that there exist positive and monotone solutions $v_\epsilon(\lambda_\epsilon,x)$ to (\ref{1}) for all $\epsilon\in(0,\epsilon_1)$.  If $v(x)$ is a monotone increasing solution to (\ref{1}), $v(L-x)$ is a decreasing solution.  Then we can construct infinitely many non-monotone- solutions of (\ref{1}) by reflecting and periodically extending $v(x)$ and $v(L-x)$ at $...,-L,0,L,...$

%\begin{remark}
%The local bifurcation curve may not be extended globally for $A<\frac{B+C}{2}$ in the strong competition case $B<A<C$.  It was shown in \cite{LNY} that
%\[(v_\epsilon,\lambda_\epsilon) \rightarrow (0,0), \text{ if } \epsilon \rightarrow \frac{a_2}{(\pi/l)^2}^{-}.\]
%%\begin{figure}[h!]
%%\begin{center}
%%\includegraphics[width=3.5in,height=2.3in]{4.eps}
%%\caption{Possible Bifurcation for $A<\frac{B+C}{2}$ and $B<A<C$.}
%%\end{center}
%%\end{figure}
%
%We can not rule out the possible bifurcation as shown in Figure 4.3, which is consistent with the global bifurcation theory.  However, we are interested whether or not the exchange of stability occurs at $(0,0,\frac{a_2}{(\pi/l)^2})$, then we may study if the continuum can be extended as the imaginary line.
%\end{remark}

\section{Stability of bifurcating solutions from $(\bar v,\bar \lambda, \epsilon_k)$}

In this section, we proceed to investigate the stability or instability of the spatially inhomogeneous solution $(v_k(s,x),\lambda_k(s))$ that bifurcates from $(\bar v,\bar \lambda)$ at $\epsilon=\epsilon_k$.  Here the stability refers to that of the inhomogeneous pattern taken as an equilibrium to the time-dependent system of (\ref{1}).   To this end, we apply the classical results from Crandall and Rabinowitz \cite{CR2} on the linearized stability with an analysis of the spectrum of system (\ref{1}).

First of all, we determine the direction to which the bifurcation curve $\Gamma_k(s)$ turns to around $(\bar v,\bar \lambda,\epsilon_k)$.  According to Theorem 1.7 from \cite{CR}, the bifurcating solutions $(v_k(s,x),\lambda_k(s),\epsilon_k(s))$ are smooth functions of $s$ and they can be written into the following expansions
\begin{equation}\label{18}
  \left \{
\begin{array}{ll}
v_k(s,x)=\bar v + s \cos\frac{k\pi x}{L}+s^2\varphi_2+s^3\varphi_3 +o(s^3),\\
\lambda_k(s,x)=\bar \lambda + s^2 \bar \lambda_2 +s^3 \bar \lambda_3+o(s^3),\\
\epsilon_k(s)= \chi_k+ s \mathcal{K}_1+s^2\mathcal{K}_2+o(s^2),\\
\end{array}
\right.
\end{equation}
where $\varphi_i \in H^2(0,L)$ satisfies that $\int_0^L \varphi_i \cos \frac{k\pi x}{L}dx=0$ for $i=2,3$,  and $\bar \lambda_2$, $\mathcal{K}_1$, $\mathcal{K}_2$ are positive constants to be determined.  We remind that $o(s^3)$ in the $v$--equation of (\ref{18}) is taken in the norm of $H^2(0,L)$.  For notational simplicity, we denote in (\ref{1})
\begin{equation}\label{19}
f(v,\lambda)=\big(a_2-\frac{b_2 \lambda}{1+v}-c_2v\big)v,~g(v,\lambda)=\frac{a_1-c_1v}{1+v}-\frac{b_1 \lambda}{(1+v)^2}.
\end{equation}
Moreover, we introduce the notations
\begin{equation}\label{20}
\bar f_v=\frac{\partial f(v,\lambda)}{\partial v}\vert_{(v,\lambda)=(\bar v,\bar\lambda)},~~\bar f_\lambda=\frac{\partial f(v,\lambda)}{\partial \lambda}\vert_{(v,\lambda)=(\bar v,\bar\lambda)}
\end{equation}
and we can define $\bar f_{v\lambda}, \bar f_{vvv}, \bar g_{v}, \bar g_{\lambda}, \bar g_{v\lambda}, \bar g_{vv}$, etc. in the same manner.  Our analysis and calculations are heavily involved with these values and we also want to remind our reader that $f(\bar v,\bar \lambda)=g(\bar v,\bar \lambda)=0$.

By substituting (\ref{18}) into (\ref{1}) and collecting the $s^2$-terms, we obtain that
\begin{equation}\label{21}
\epsilon_k \varphi''_2-\mathcal{K}_1 \Big(\frac{k\pi}{L}\Big)^2\cos\frac{k\pi x}{L} +\bar f_v \varphi_2 +\bar f_\lambda \bar  \lambda_2+\frac{1}{2}\bar f_{vv}\cos^2\frac{k\pi x}{L}=0.
\end{equation}
Multiplying (\ref{21}) by $\cos\frac{k\pi x}{L}$ and integrating it over $(0,L)$ by parts, we see that
\[\frac{k^2\pi^2}{2L}\mathcal{K}_1=\Big(-\epsilon_k\Big(\frac{k\pi}{L}\Big)^2+\bar f_v\Big) \int_0^L \varphi_2 \cos \frac{k\pi x}{L}dx =0,\]
therefore $\mathcal{K}_1=0$ and the bifurcation at $(\bar v,\bar \lambda, \epsilon_k)$ is of pitch-fork type for all $\epsilon_k$, $k\in N^+$.

By collecting the $s^3$-terms from (\ref{1}), we have
\begin{equation}\label{22}
\begin{array}{ll}
&\epsilon_k \varphi''_3+\bar f_v \varphi_3+\bar f_\lambda \bar \lambda_3-\mathcal{K}_2 \Big(\frac{k\pi}{L}\Big)^2\cos\frac{k\pi x}{L}\\
+&\Big(\bar f_{vv}\varphi_2+\bar f_{v\lambda}\bar\lambda_2  \Big)\cos\frac{k\pi x}{L}+\frac{1}{6} \bar f_{vvv}\cos^3\frac{k\pi x}{L}=0.
\end{array}
\end{equation}

Testing (\ref{22}) by $\cos\frac{k\pi x}{L}$, we conclude through straightforward calculations that
\begin{equation}\label{23}
\frac{k^2\pi^2}{2L}\mathcal{K}_2=\frac{1}{2}\bar f_{vv}   \Big(\int_0^L \varphi_2 \cos \frac{2k\pi x}{L}dx+\int_0^L \varphi_2 dx\Big)+\frac{L}{2} \bar f_{v\lambda} \bar \lambda_2+\frac{L}{16}\bar f_{vvv}.
\end{equation}
Therefore, we need to evaluate the integrals $\int_0^L \varphi_2 \cos \frac{2k\pi x}{L}dx $ and $\int_0^L \varphi_2 dx $ as well as $\bar \lambda$ to find the value of $\mathcal{K}_2$.

Multiplying (\ref{21}) by $\cos\frac{2k\pi x}{L}$ and then integrating it over $(0,L)$ by parts, since $\mathcal{K}_1=0$, we have from straightforward calculations that
\begin{equation}\label{24}
\int_0^L \varphi_2 \cos \frac{2k\pi x}{L}dx=\frac{L}{24\epsilon_k}(\frac{L}{k\pi})^2 \bar f_{vv}=\frac{L^3}{12\epsilon_k (k\pi)^2}\Big(\frac{b_2\bar \lambda}{(1+\bar v)^3}-c_2\Big).
\end{equation}
Integrating (\ref{21}) over $(0,L)$ by parts, we have that
\begin{equation}\label{25}
\bar f_v \int_0^L \varphi_2 dx+\bar f_\lambda \bar \lambda_2 L+\frac{L}{4}\bar f_{vv}=0,
\end{equation}
where we have applied in (\ref{25}) the fact that $\bar f_v=\epsilon_k \big(\frac{k\pi}{L}\big)^2$ in order to keep the solution in a neat form in the coming calculations.  Furthermore, we collect $s^2$ terms from the integration equation in (\ref{1}) and have that
\begin{equation}\label{26}
\bar g_v \int_0^L \varphi_2 dx+\bar g_\lambda \bar \lambda_2 L+\frac{L}{4}\bar g_{vv}=0.
\end{equation}
Solving (\ref{25}) and (\ref{26}) leads us to
\begin{equation}\label{27}
\int_0^L \varphi_2 dx =\frac{(\bar f_\lambda\bar g_{vv}-\bar f_{vv} \bar g_\lambda)L }{4(\bar f_v\bar g_\lambda -\bar f_\lambda\bar g_v)},~\bar \lambda_2 =\frac{ \bar f_{vv} \bar g_{v}-\bar f_{v} \bar g_{vv}  }{4(\bar f_v\bar g_\lambda -\bar f_\lambda\bar g_v)}.
\end{equation}

By substituting (\ref{24}) and (\ref{27}) into (\ref{23}), we obtain that
\begin{equation}\label{28}
\frac{(k\pi)^2}{2L^2} \mathcal{K}_2=\frac{\bar f_{vv} \bar g_{vv}\bar f_\lambda-\bar f_{v\lambda}(\bar f_v\bar g_{vv}-\bar g_v\bar f_{vv})}{8(\bar f_v \bar g_\lambda -\bar g_v\bar f_\lambda)}+\frac{\bar f^2_{vv}}{48 \bar f_v}+\frac{\bar f_{vvv}}{16}.
\end{equation}
On the other hand, we have from straightforward calculations that
\begin{equation}\label{29}
\bar f_v=\frac{b_2\bar \lambda \bar v}{(1+\bar v)^2}-c_2\bar v,~\bar f_\lambda=-\frac{b_2 \bar v}{1+\bar v},~\bar f_{v\lambda}=-\frac{b_2}{(1+\bar v)^2},
\end{equation}
\begin{equation}\label{30}
\bar f_{vv}=\frac{2b_2\bar \lambda}{(1+\bar v)^3}-2c_2,~\bar f_{vvv}=-\frac{6b_1 \bar \lambda}{(1+\bar v)^4},
\end{equation}
and
\begin{equation}\label{31}
\bar g_v=-\frac{a_1+c_1}{(1+\bar v)^2}+\frac{2b_1 \bar \lambda}{(1+\bar v)^3},~\bar g_\lambda=-\frac{b_1  }{(1+\bar v)^2},~\bar g_{vv}=\frac{2(a_1+c_1)}{(1+\bar v)^3}-\frac{6b_1 \bar \lambda}{(1+\bar v)^4}.
\end{equation}
Moreover, we can also have that
\begin{equation}\label{32}
\bar f_v\bar g_\lambda -\bar f_\lambda\bar g_v=\frac{(b_1c_2-b_2c_1)\bar v}{(1+\bar v)^2}.
\end{equation}
For the simplicity of calculations, we assume that $b_1=0$ from now on.  Substituting (\ref{29})--(\ref{31}) into (\ref{32}), we have that
\begin{eqnarray}\label{33}
\frac{(k\pi)^2}{2L^2} \mathcal{K}_2 \!\!\!\!\!\!   &= \!\!\!\!\!\!   & \frac{\frac{2(a_1+c_1)}{(1+\bar v)^3}\big(\frac{2b_2\lambda}{(1+\bar v)^3}\!-\!2c_2 \big)\big(\frac{-b_2\bar v}{1+\bar v} \big)\!+\!\frac{b_2 }{(1+\bar v)^2}\big(\frac{b_2 \bar \lambda}{1+\bar v}\!-\!c_2(1+2\bar v) \big)  }{-8b_2c_1\bar v/(1+\bar v)^2 }\!+\!\frac{\bar f^2_{vv}}{48\bar f_v}\!+\!\frac{\bar f_{vvv}}{16}     \nonumber \\
   \!\!\!\!\!\! &=\!\!\!\!\!\!  &  -\frac{(a_1+c_1)\frac{b_2\bar \lambda(1-2\bar v) }{(1+\bar v)^3}+\frac{2c_2\bar v^2-c_2}{1+\bar v} }{4c_1\bar v(1+\bar v)^2}+\frac{\big(\frac{2b_2\bar \lambda}{(1+\bar v)^3}-2c_2\big)^2}{48(\frac{b_2\bar \lambda \bar v}{(1+\bar v)^2}-c_2\bar v)}-\frac{3 b_1 \bar \lambda}{8(1+\bar v)^4}.
\end{eqnarray}

For the simplicity of notations, we introduce
\[t=\frac{b_2\bar \lambda}{(1+\bar v)^2}=\frac{a_2-c_2(1+\bar v)}{1+\bar v},\]
then one can easily see that (\ref{13}) implies that bifurcation occurs at $(\bar v,\bar \lambda,\epsilon_k)$ if and only if $t>c_2$ and we shall assume that $t>c_2$ from now on.  In terms of the new variable $t$, we observe that (\ref{33}) becomes
\begin{eqnarray}\label{34}
\frac{(k\pi)^2}{2L^2} \mathcal{K}_2 \!\!\!&=\!\!\!& -\frac{\frac{ 1-2\bar v }{ 1+\bar v }t+\frac{2c_2\bar v^2-c_2}{1+\bar v} }{4c_1\bar v(1+\bar v) }+\frac{\big(\frac{t}{1+\bar v} -c_2\big)^2}{12\bar v (t-c_2)}-\frac{3t}{8(1+\bar v)^2}     \nonumber \\
   \!\!\!&=\!\!\!&  \frac{-\frac{3(t-c_2)}{1+\bar v} \Big(\frac{ 1-2\bar v }{ 1+\bar v }t+\frac{2c_2\bar v^2-c_2}{1+\bar v}   \Big)+\big(\frac{t}{1+\bar v} -c_2\big)^2-\frac{9\bar v t(t-c_2)}{2(1+\bar v)^2}  }{12\bar v (t-c_2)}  \\
   \!\!\!&=\!\!\!&\frac{F(t)}{24 \bar v(1+\bar v)^2 (t-c_2)}=\frac{\alpha t^2+\beta t+\gamma}{24 \bar v(1+\bar v)^2 (t-c_2)},~t>c_2, \nonumber
\end{eqnarray}
where in the last line of (\ref{34}) we have used the notations
\begin{equation}\label{35}
\alpha=3\bar v-4,~\beta=-(12\bar v^2+7\bar v-8)c_2,~\gamma=(14\bar v^2+4\bar v-4)c^2_2.
\end{equation}
Now we are ready to determine the sign of $\mathcal{K}_2$ which is crucial in the stability analysis of $(v_k(x,s),\lambda_k(s),\epsilon_k)$ as we shall see later.  To this end, we first have from straightforward calculations that
\[F(c_2)=2 c_2^2\bar v^2>0;\]
moreover, if $\bar v\neq \frac{4}{3}$, the quadratic function $F(t)$ has its determinant $(144\bar v^4+33\bar v^2)c_2^2$ and $F(t)=0$ always have two roots which are
\begin{eqnarray}\label{36}
t_1^*=\frac{\big((12 \bar v^2+7\bar v-8)-\sqrt{144\bar v^4+33\bar v^2}\big)c_2}{2(3\bar v -4)},\nonumber  \\
t_2^*=\frac{\big((12 \bar v^2+7\bar v-8)+\sqrt{144\bar v^4+33\bar v^2}\big)c_2}{2(3\bar v -4)};
\end{eqnarray}
furthermore, we readily see that $-\frac{\beta}{2\alpha}-c_2=\frac{(12\bar v^2+\bar v)c_2}{2\alpha}$ and it implies that $t_1^*<c_2<t^*_2$ if $\bar v \in (0,\frac{4}{3})$ and $c_2<t_1^*<t^*_2$ if $\bar v \in (\frac{4}{3},\infty)$.  In particular, if $\bar v =\frac{4}{3}$, we have that $F(t)=\beta t+\gamma=-\frac{68c_2}{3}t+\frac{266c_2^2}{9}$ and it has a unique positive root $\frac{59c_2}{51}$.  Then we have the following results on the signs of $\mathcal{K}_1$ and $\mathcal{K}_2$.
\begin{proposition}
Suppose that (\ref{13}) holds and the bifurcation solutions takes the form (\ref{18}).  Then $\mathcal{K}_1=0$ and the bifurcation branch is of pitchfork type at $(\bar v,\bar \lambda,\epsilon_k)$ for each $k\in N^+$.  Moreover, we assume that $b_1=0$ and denote $t=\frac{a_2-c_2(1+\bar v)}{1+\bar v}$, then we have that, \\
if $\bar v \in (0,\frac{4}{3})$, $\mathcal{K}_2>0$ for $t\in(c_2,t^*_2) $ and $\mathcal{K}_2<0$ for $t\in(t^*_2,\infty)$; \\
if $\bar v =\frac{4}{3}$, $\mathcal{K}_2>0$ for $t\in(c_2,\frac{59 c_2}{51}) $ and $\mathcal{K}_2<0$ for $t\in(\frac{59 c_2}{51},\infty)$;\\
if $\bar v \in ( \frac{4}{3},\infty)$, $\mathcal{K}_2>0$ for $t\in(c_2,t^*_1) \cup (t^*_2,\infty)$ and $\mathcal{K}_2<0$ for $t\in(t^*_1,t^*_2)$.
\end{proposition}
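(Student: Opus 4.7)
The plan is to separate the statement into two parts that have already been almost fully prepared in equations (21)--(36): proving $\mathcal{K}_1=0$, and then determining the sign of $\mathcal{K}_2$ by a case analysis of the quadratic $F(t)=\alpha t^2+\beta t+\gamma$.

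For $\mathcal{K}_1=0$, I would expand $\mathcal{F}(v_k(s,x),\lambda_k(s),\epsilon_k(s))=0$ in powers of $s$ to obtain (21), and then test against $\cos(k\pi x/L)$. Integration by parts converts $\epsilon_k\int_0^L\varphi_2''\cos(k\pi x/L)\,dx$ into $-\epsilon_k(k\pi/L)^2\int_0^L\varphi_2\cos(k\pi x/L)\,dx$; this cancels the $\bar f_v\int_0^L\varphi_2\cos(k\pi x/L)\,dx$ term thanks to the bifurcation identity $\bar f_v=\epsilon_k(k\pi/L)^2$ from (11). The $\bar f_\lambda\bar\lambda_2$ contribution integrates to $0$ by orthogonality, and $\int_0^L\cos^3(k\pi x/L)\,dx=0$ kills the last term. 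What remains is $\frac{k^2\pi^2}{2L}\mathcal{K}_1=0$, which gives the pitchfork conclusion for every $k\in N^+$.

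For $\mathcal{K}_2$, the formula (34) reads
\[
\frac{(k\pi)^2}{2L^2}\,\mathcal{K}_2=\frac{F(t)}{24\bar v(1+\bar v)^2(t-c_2)},\qquad t>c_2,
\]
so since $\bar v>0$, $(1+\bar v)^2>0$ and $t-c_2>0$ (exactly the content of (13)), the sign of $\mathcal{K}_2$ equals the sign of $F(t)$ on $(c_2,\infty)$. Two facts pin this down: $F(c_2)=2c_2^2\bar v^2>0$, and $-\frac{\beta}{2\alpha}-c_2=\frac{\bar v(12\bar v+1)c_2}{2\alpha}$ has the same sign as $\alpha=3\bar v-4$.

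I would then split into three cases. If $\bar v\in(0,\tfrac{4}{3})$ then $F$ opens downward ($\alpha<0$) and its vertex lies to the left of $c_2$; since $F(c_2)>0$ this forces $t_1^*<c_2<t_2^*$, so $F>0$ on $(c_2,t_2^*)$ and $F<0$ on $(t_2^*,\infty)$. If $\bar v\in(\tfrac{4}{3},\infty)$ then $F$ opens upward and its vertex lies to the right of $c_2$; combined with $F(c_2)>0$ this forces $c_2<t_1^*<t_2^*$, so $F>0$ on $(c_2,t_1^*)\cup(t_2^*,\infty)$ and $F<0$ on $(t_1^*,t_2^*)$. The borderline $\bar v=\tfrac{4}{3}$ collapses $F$ to the linear function $-\tfrac{68c_2}{3}t+\tfrac{266c_2^2}{9}$, whose unique root is $\tfrac{59c_2}{51}$ by direct substitution. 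No step poses a genuine obstacle; the main care is ensuring that the location of $c_2$ relative to the roots is correctly read off from the sign of $\alpha$ together with $F(c_2)>0$, since these two pieces together (rather than either alone) determine the root ordering in each regime.
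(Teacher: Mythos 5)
Your proposal is correct and follows essentially the same route as the paper: $\mathcal{K}_1=0$ comes from testing the $s^2$-equation (2.21) against $\cos(k\pi x/L)$ together with $\bar f_v=\epsilon_k(k\pi/L)^2$, and the sign of $\mathcal{K}_2$ is read off from the sign of $F(t)$ in (3.17) using $F(c_2)=2c_2^2\bar v^2>0$, the opening direction $\alpha=3\bar v-4$, and the vertex location relative to $c_2$. Note only that you inherit two slips from the paper rather than from your own reasoning: with the explicit formulas (3.19) the roots satisfy $t_2^*<c_2<t_1^*$ when $\bar v\in(0,\tfrac{4}{3})$ (so the root to the right of $c_2$ in that case is the one labelled $t_1^*$, and the statement should be read with the convention $t_1^*\le t_2^*$), and at $\bar v=\tfrac{4}{3}$ the constant term from (3.18) is $\gamma=\tfrac{236c_2^2}{9}$ rather than $\tfrac{266c_2^2}{9}$, which is exactly what makes the unique root equal $\tfrac{59c_2}{51}$.
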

The graphes of $\mathcal{K}_2$ as a function $t$ are illustrated in Figure (1).  It should be observed that, $\mathcal{K}_2>0$ for $t$ slightly bigger than $c_2$ since the bifurcation value $\epsilon_k\ll 1$ in this situation and we have that $\mathcal{K}_2$ is always positive regardless of $\bar v$.

 \begin{remark}
We want to note that the assumption $b_1=0$ is made only for the sake of mathematical simplicity while $\mathcal{K}_2$ becomes extremely complicated to calculate if $b_1>0$ in (\ref{1}).  On the other hand, we will shall see in Section 4 that, even when $b_1=0$, system (\ref{1}) admits solutions with single transition layer for $\epsilon$ being sufficiently small.  Moreover, this limiting condition is also necessary in our analysis of the transition-layer solutions in Section 4.
\end{remark}

\begin{figure}[!htb]
\minipage{.4\textwidth}
  \includegraphics[width=1.5in]{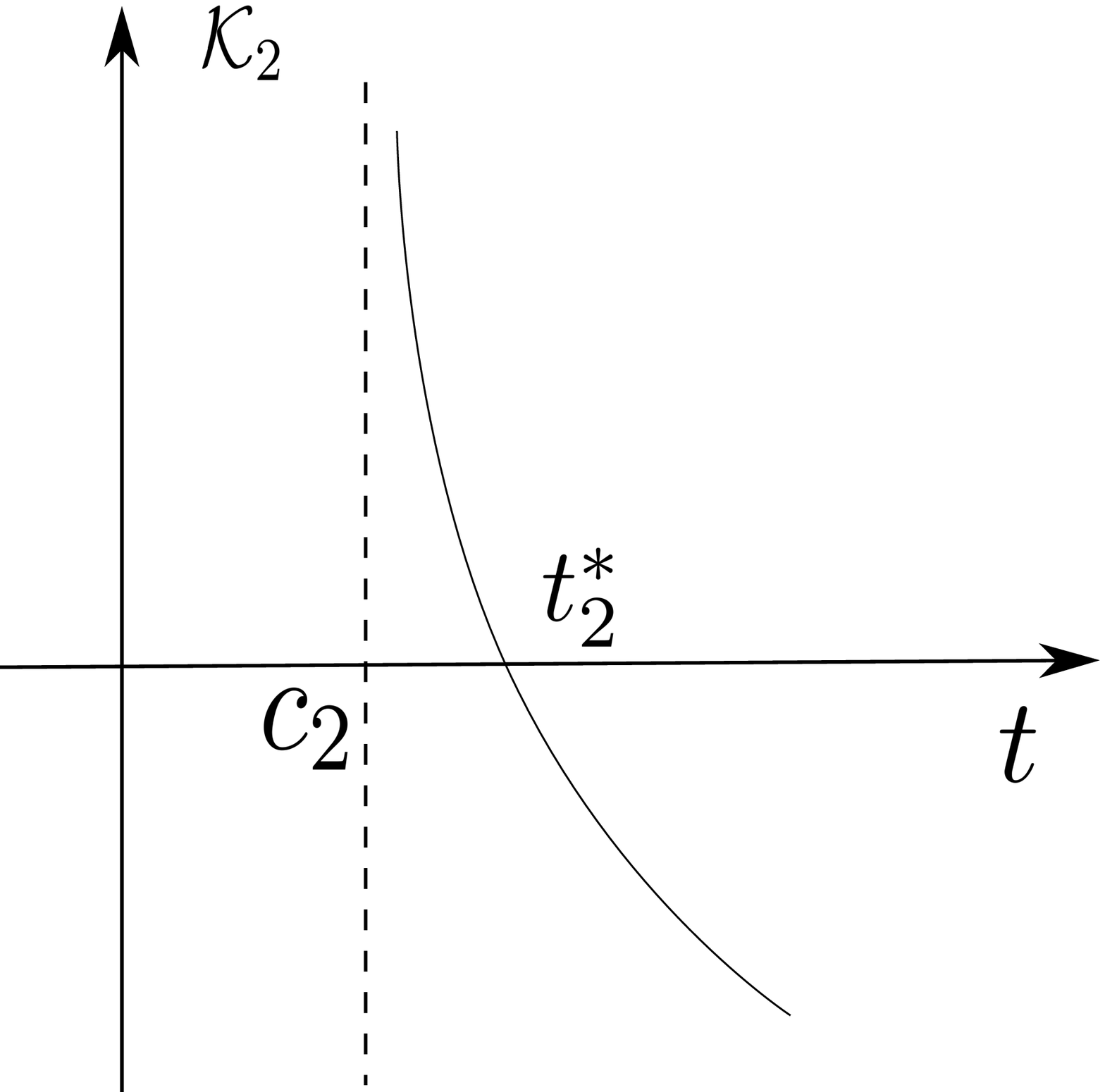}
  \caption*{$\bar v\in(0,\frac{4}{3})$}\label{fig:awesome_image1}
\endminipage
\minipage{.4\textwidth}
  \includegraphics[width=1.5in]{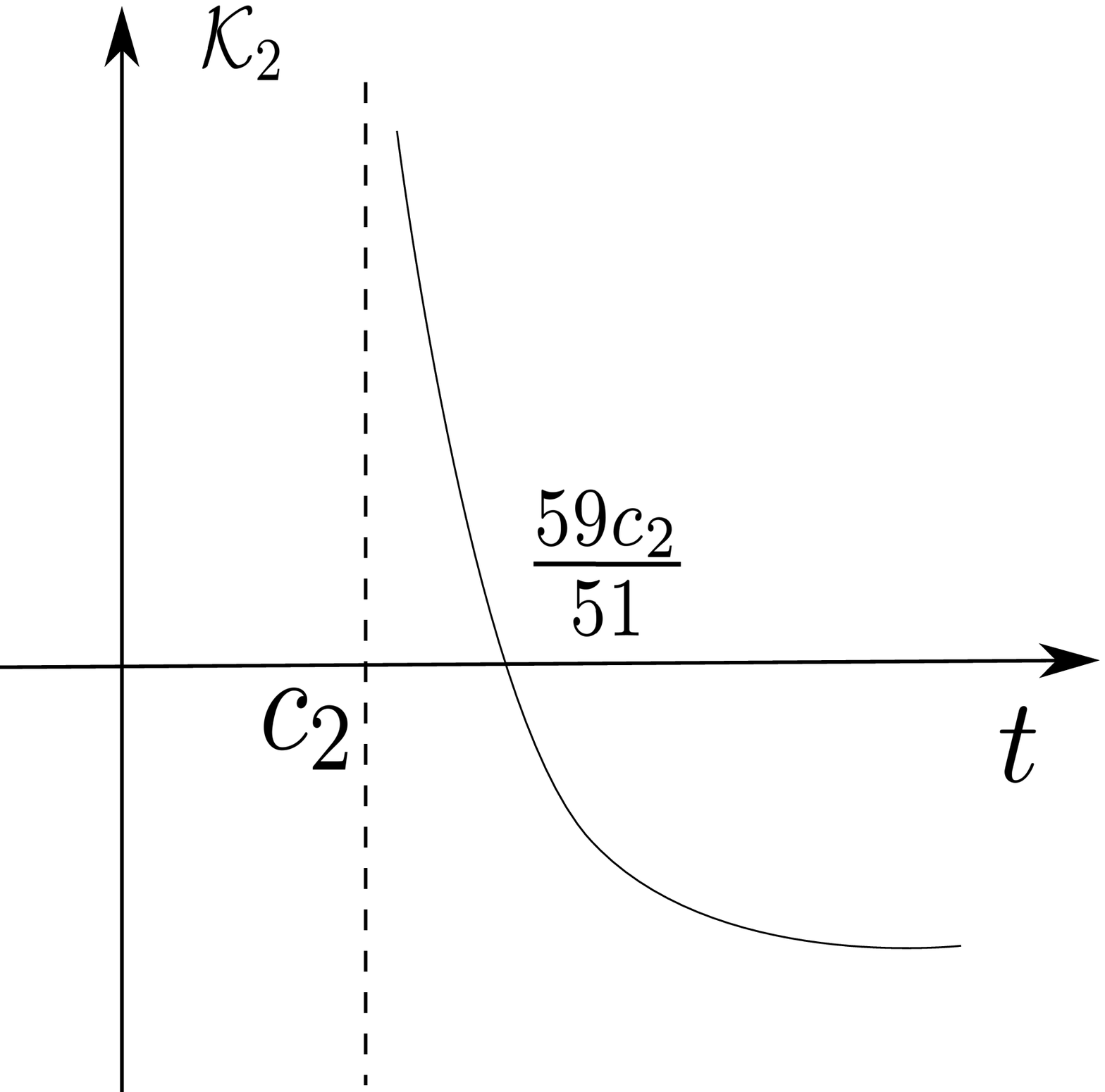}
  \caption*{$\bar v=\frac{4}{3}$}\label{fig:awesome_image2}
\endminipage
\minipage{.4\textwidth}%
  \includegraphics[width=1.5in]{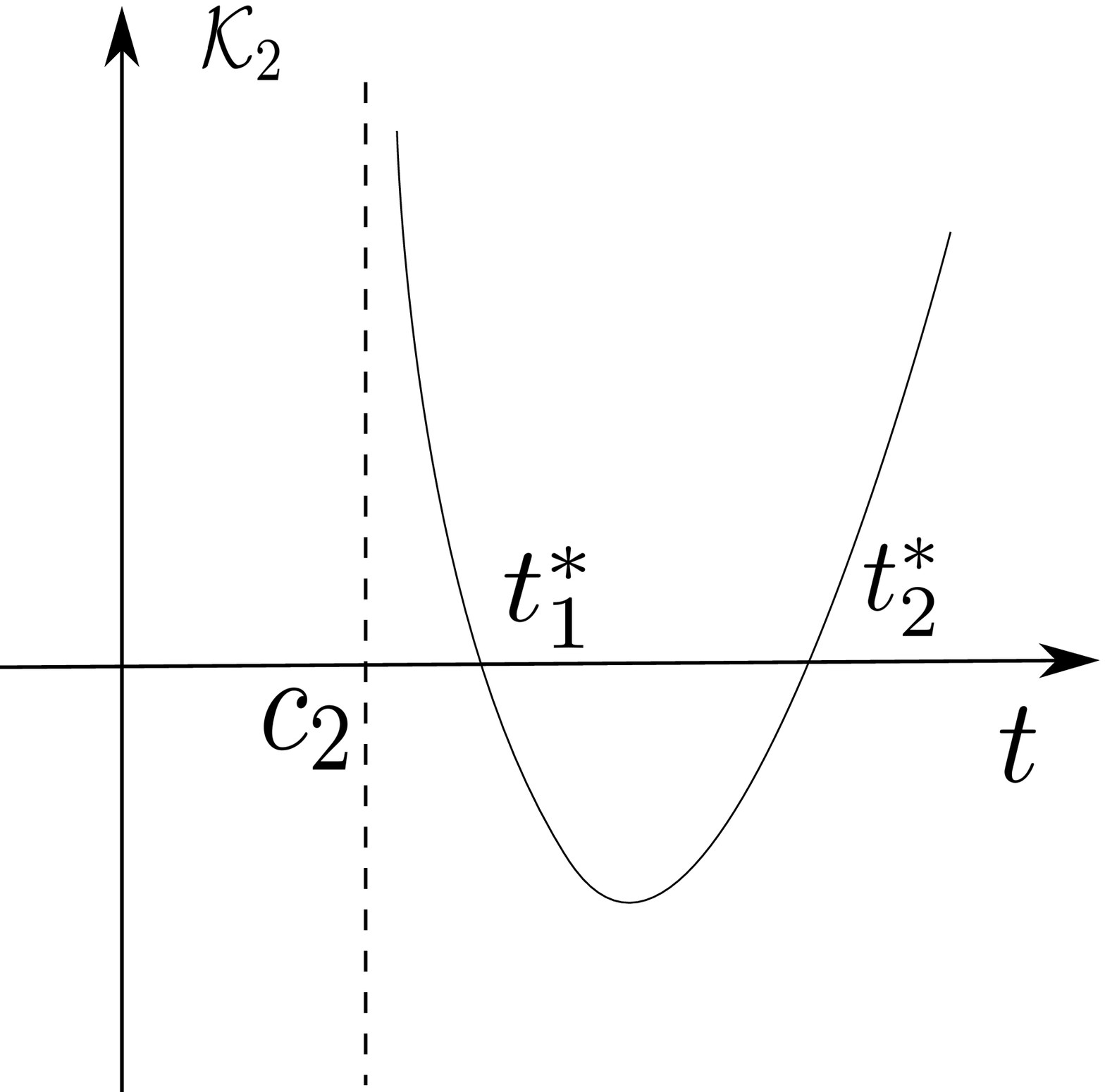}
  \caption*{$\bar v\in(\frac{4}{3},\infty)$}\label{fig:awesome_image3}
\endminipage
\caption{Graphs of $\mathcal{K}_2$ as a function of $t=\frac{a_2-c_2\bar v}{1+\bar v}$.  The assumption that $t>c_2$ is required since bifurcation occurs at $(\bar v,\bar \lambda,\epsilon_k)$ only when (\ref{13}) holds.  }
\end{figure}

We are ready to present the following result on the stability of the bifurcation solution $(v_k(s,x),\lambda_k(s))$ established in Theorem 2.2.  Here the stability refers to the stability of the inhomogeneous solutions taken as an equilibrium to the time-dependent counterpart to (\ref{1}).
\begin{theorem}\label{thm31}
Assume that (\ref{13}) is satisfied.  Then for each $k\in N^+$, if $\mathcal{K}_2>0$, the bifurcation curve $\Gamma_k(s)$ at $(\bar v,\bar \lambda,\epsilon_k)$ turns to the right and the bifurcation solution $(v_k(s,x),\lambda_k(s))$ is unstable and if $\mathcal{K}_2<0$, $\Gamma_k(s)$ turns to the left and $(v_k(s,x),\lambda_k(s))$ is asymptotically stable.
\end{theorem}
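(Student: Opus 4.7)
My strategy is to combine two ingredients: (i) the sign of $\mathcal{K}_2$ read off from expansion \eqref{18} to describe the direction in which $\Gamma_k(s)$ turns, and (ii) the linearized stability theorem of Crandall and Rabinowitz \cite{CR2}, which converts the sign of $\mathcal{K}_2$ into a statement about the principal eigenvalue of the linearization along the bifurcating branch.

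The direction of bifurcation is immediate.  We have already established $\mathcal{K}_1=0$, so from \eqref{18}
\[
\epsilon_k(s)-\epsilon_k=\mathcal{K}_2 s^2+o(s^2),
\]
and hence $\Gamma_k(s)$ turns to the right (resp.\ left) at $(\bar v,\bar\lambda,\epsilon_k)$ exactly when $\mathcal{K}_2>0$ (resp.\ $\mathcal{K}_2<0$).

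For the stability statement I plan to invoke the principle of exchange of stability in the form developed in \cite{CR2}.  Taking $\epsilon$ as the bifurcation parameter, the simple-eigenvalue hypothesis is already verified in Section 2: $0$ is a simple eigenvalue of $D_{(v,\lambda)}\mathcal{F}(\bar v,\bar\lambda,\epsilon_k)$ with one-dimensional kernel $\mathrm{span}\{(\cos(k\pi x/L),0)\}$, and the transversality condition \eqref{15} gives $0\notin\sigma\!\bigl(\frac{d}{d\epsilon}D_{(v,\lambda)}\mathcal{F}\bigr)$ on the kernel.  The CR formula then produces a smooth eigenvalue $\mu(\epsilon)$ of the linearization at the trivial branch $(\bar v,\bar\lambda)$ with $\mu(\epsilon_k)=0$, and a smooth eigenvalue $\gamma(s)$ of the linearization at the bifurcating branch $(v_k(s,x),\lambda_k(s))$ with $\gamma(0)=0$, related by
\[
\lim_{s\to 0}\,\frac{-s\,\epsilon_k'(s)\,\mu'(\epsilon_k)}{\gamma(s)}=1.
\]
Since $\epsilon_k'(s)=2\mathcal{K}_2 s+o(s)$, this yields $\gamma(s)\sim -2\mathcal{K}_2\mu'(\epsilon_k)\,s^2$ as $s\to 0$.

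It remains to compute $\mu'(\epsilon_k)$.  Because the eigenfunction at the crossing is $(\cos(k\pi x/L),0)$ and the integral-constraint component of the linearization is automatically satisfied on this kernel, $\mu(\epsilon)$ is characterised (near $\epsilon_k$) by the scalar eigenvalue problem
\[
\epsilon \varphi''+\bar f_v\,\varphi=\mu(\epsilon)\varphi,\qquad \varphi'(0)=\varphi'(L)=0,
\]
on the $k$-th Neumann eigenmode, so $\mu(\epsilon)=-\epsilon(k\pi/L)^2+\bar f_v$ and $\mu'(\epsilon_k)=-(k\pi/L)^2<0$.  Substituting back,
\[
\gamma(s)=\frac{2(k\pi)^2}{L^2}\,\mathcal{K}_2\,s^2+o(s^2),
\]
so $\mathrm{sign}\,\gamma(s)=\mathrm{sign}\,\mathcal{K}_2$ for $s\neq 0$ small.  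Since all other eigenvalues of the linearization at $(\bar v,\bar\lambda,\epsilon_k)$ are bounded away from the imaginary axis (the Neumann modes $j\neq k$ give eigenvalues $-\epsilon_k(j\pi/L)^2+\bar f_v\neq 0$, and the constraint contributes no additional spectrum crossing zero), $\gamma(s)$ is the principal eigenvalue governing the linear stability.  Hence $(v_k(s,x),\lambda_k(s))$ is unstable when $\mathcal{K}_2>0$ and asymptotically stable when $\mathcal{K}_2<0$, completing the proof.

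The main obstacle I expect is the bookkeeping for the nonstandard functional-analytic setting: the operator $\mathcal{F}$ has a mixed differential/integral structure, so one has to argue that the CR stability framework (designed for a single parameter $\epsilon$ entering a Fredholm operator) really applies here and that the only spectral crossing at $\epsilon_k$ is the one coming from the Neumann eigenmode $\cos(k\pi x/L)$.  This requires the Fredholm-plus-compact decomposition from Lemma 2.1(4) and a check that the non-local constraint does not introduce additional neutral eigenvalues on the kernel direction, which is exactly the ``$\lambda=0$ on the kernel'' computation done below \eqref{9}.
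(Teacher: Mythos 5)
Your proposal is correct and follows essentially the same route as the paper: the direction of the branch comes from $\mathcal{K}_1=0$ and the expansion (\ref{18}), and the stability is obtained from the Crandall--Rabinowitz exchange-of-stability relation linking $\eta(s)$ (your $\gamma(s)$) to $-s\,\epsilon_k'(s)\,\dot\mu(\epsilon_k)$, with $\dot\mu(\epsilon_k)=-(k\pi/L)^2$ and hence $\mathrm{sgn}\,\eta(s)=\mathrm{sgn}\,\mathcal{K}_2$. The only cosmetic difference is that you read off $\mu(\epsilon)=-\epsilon(k\pi/L)^2+\bar f_v$ explicitly on the kernel mode, while the paper obtains $\dot\mu(\epsilon_k)$ by differentiating the eigenvalue problem (\ref{38}) in $\epsilon$ and testing against $\cos(k\pi x/L)$.
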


The bifurcation branches described in Theorem \ref{thm31} are formally presented in Figure 2.  The solid curve means stable bifurcation solutions and the dashed means the unstable solution.

To study the stability of the bifurcation solution from $(\bar v,\bar \lambda,\epsilon_k)$, we linearize (\ref{1}) at $(v_k(s,x),\lambda_k(s),\epsilon_k(s))$.  By the principle of the linearized stability in Theorem 8.6 \cite{CR}, to show that they are asymptotically stable, we need to prove that the each eigenvalue $\eta$ of the following elliptic problem has negative real part:
\[D_{(v,\lambda)}\mathcal{F}(v_k(s,x),\lambda_k(s),\epsilon_k(s))(v,\lambda)=\eta (v,\lambda),~(v,\lambda)\in\mathcal{X} \times \mathbb{R}.\]
We readily see that this eigenvalue problem is equivalent to
\begin{equation}\label{37}
\left\{
\begin{array}{ll}
\epsilon_k(s)  v''+\big(a_2-\frac{b_2\lambda_k(s)}{(1+v_k(s,x))^2}-2c_2v_k(s,x) \big)v- \frac{b_2v_k(s,x)}{1+v_k(s,x)}\lambda=\eta v,&x\in(0,L), \\
\int_0^L \big(\frac{2b_1\lambda_k(s)}{(1+v_k(s,x))^3}-\frac{a_1+c_1}{(1+v_k(s,x))^2} \big)v- \frac{b_1 \lambda}{(1+v_k(s,x))^2} dx=\eta \lambda,\\
v'(0)=v'(L)=0,
\end{array}
\right.
\end{equation}
where $v_k(s,x)$, $\lambda_k(s)$ and $\epsilon_k(s)$ are as established in Theorem 2.2.  On the other hand, we observe that $0$ is a simple eigenvalue of $D_{(v,\lambda)}\mathcal{F}(\bar v,\bar \lambda,\epsilon_k)$ with an eigenspace $\text{span}\{(\cos \frac{k\pi x}{L},0)\}$. It follows from Corollary 1.13 in \cite{CR} that, there exists an internal $I$ with $\epsilon_k\in I$ and continuously differentiable functions $\epsilon\in I\rightarrow \mu(\epsilon),~s\in(-\delta,\delta) \rightarrow \eta(s)$ with $\eta(0)=0$ and $\mu(\epsilon_k)=0$ such that, $\eta(s)$ is an eigenvalue of (\ref{63}) and $\mu(\chi)$ is an eigenvalue of the following eigenvalue problem
\begin{equation}\label{38}
D_{(v,\lambda)}\mathcal{F}(\bar v,\bar \lambda,\epsilon)(v,\lambda)=\mu(v,\lambda),~(v,\lambda)\in \mathcal{X} \times \mathbb{R};
\end{equation}
moreover, $\eta(s)$ is the only eigenvalue of (\ref{37}) in any fixed neighbourhood of the origin of the complex plane (the same assertion can be made on $\mu(\epsilon)$).  We also know from \cite{CR} that the eigenfunctions of (\ref{38}) can be represented by $(v(\epsilon,x),\lambda(\epsilon))$ which depend on $\epsilon$ smoothly and are uniquely determined through $\big(v(\epsilon_k,x),\lambda(\epsilon_k)\big)=\big(\cos \frac{k\pi x}{L},0 \big)$, together with $\big(v(\epsilon,x)- \cos \frac{k\pi x}{L}, \lambda(\epsilon,x)  \big)  \in \mathcal{Z}$.

\begin{proof} [Proof\nopunct] \emph{of Theorem} \ref{thm31}.
Differentiating (\ref{38}) with respect to $\epsilon$ and setting $\epsilon=\epsilon_k$, we arrive at the following system since $\mu(\epsilon_k)=0$
\begin{equation}\label{39}
\left\{
\begin{array}{ll}
-\big(\frac{k\pi}{L}\big)^2\cos \frac{k\pi x}{L}+\epsilon_k \dot{v}''+  \big(  \frac{a_2-c_2-2c_2 \bar{v}}{1+\bar{v}} \big)\bar{v}\dot{v}- \frac{b_2\bar{v}}{1+\bar{v}}\dot{\lambda}=\dot{\mu}(\epsilon_k)\cos \frac{k\pi x}{L},&x\in(0,L), \\
\int_0^L\big( \frac{a_1-c_1-2c_1 \bar{v}}{(1+\bar{v})^2}  \big)\dot{v}- \frac{b_1 \dot{\lambda}}{(1+\bar{v})^2} dx=0,\\
\dot{v}'(0)=\dot{v}'(L)=0,
\end{array}
\right.
\end{equation}
where the dot-sign means the differentiation with respect to $\epsilon$ evaluated at $\epsilon=\epsilon_k$ and in particular $\dot{v}=\frac{\partial v(\epsilon,x)}{\partial \epsilon}\big\vert_{\epsilon=\epsilon_k}$, $\dot{\lambda}=\frac{\partial \lambda(\epsilon,x)}{\partial \epsilon}\big\vert_{\epsilon=\epsilon_k}$.

Multiplying the first equation of (\ref{39}) by $\cos\frac{k\pi x}{L}$ and integrating it over $(0,L)$ by parts, we obtain that
\[\dot{\mu}(\epsilon_k)=-\big(\frac{k\pi}{L}\big)^2.\]
According to Theorem 1.16 in \cite{CR}, the functions $\eta(s)$ and $-s\epsilon'_k(s)\dot{\mu}(\epsilon_k)$ have the same zeros and the same signs for $s\in(-\delta,\delta)$.  Moreover
\[\lim_{s\rightarrow 0,~\eta(s)\neq0}\frac{-s\epsilon'_k(s)\dot{\mu}(\epsilon_k)}{\eta(s)}=1.\]
Now, since $\mathcal{K}_1=0$, it follows that $\lim_{s\rightarrow 0} \frac{s^2\mathcal{K}_2 }{\eta(s)}=\big( \frac{L}{k\pi}\big)^2$ and we readily see that $\text{sgn}(\eta(s))=\text{sgn}(\mathcal{K}_2)$ for $s\in(-\delta,\delta)$, $s\neq0$.  Therefore, we have proved Theorem \ref{thm31} according to the discussion above.

\begin{figure}[!htb]
\minipage{.6\textwidth}
  \includegraphics[width=1.85in]{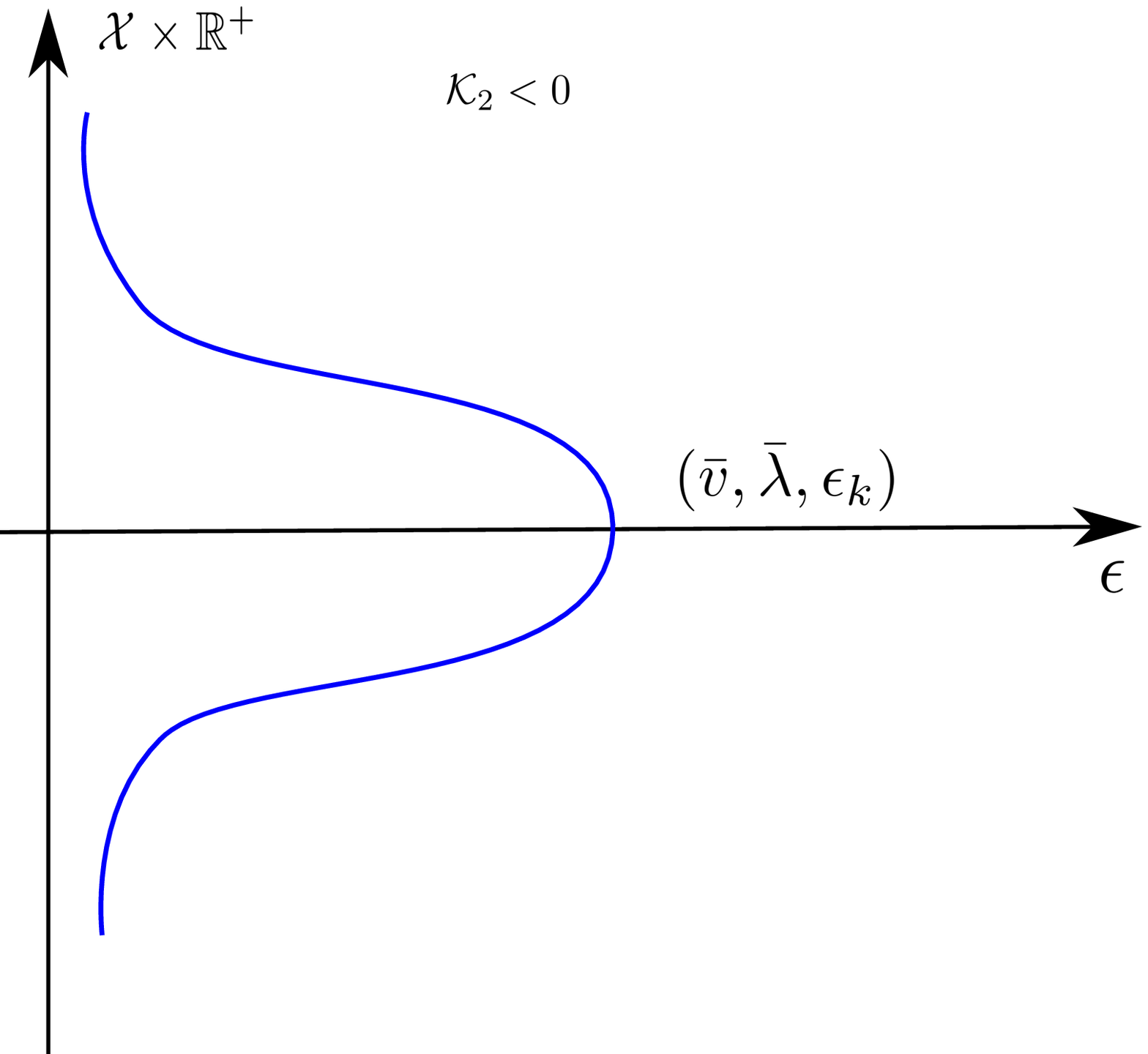}
  \caption*{An illustration of the bifurcation branch for $\mathcal{K}_2<0$}\label{fig:awesome_image1}
\endminipage
\minipage{.6\textwidth}
  \includegraphics[width=1.8in]{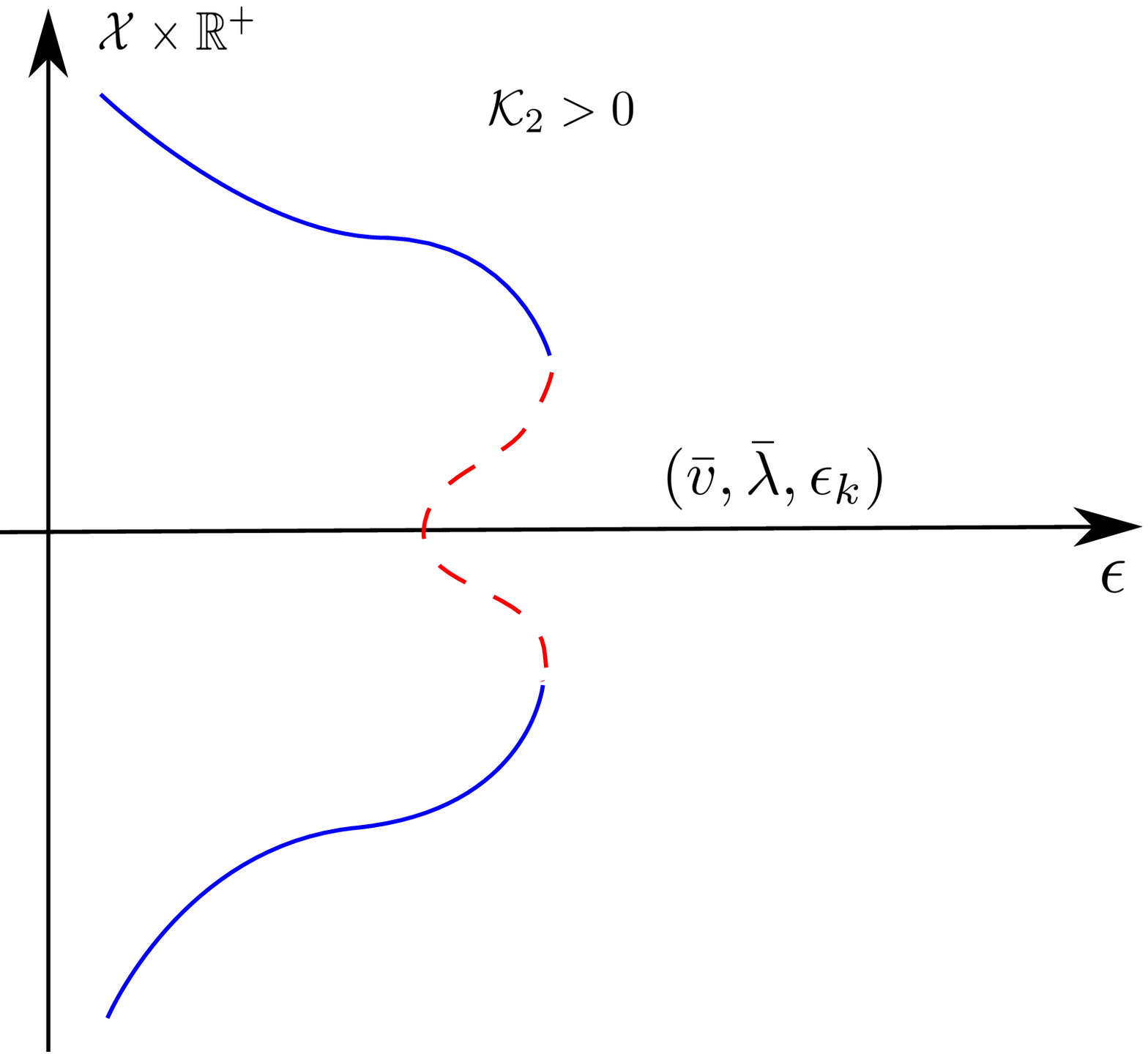}
  \caption*{An illustration of the bifurcation branch for $\mathcal{K}_2>0$}\label{fig:awesome_image2}
\endminipage
\caption{Pitchfork-type bifurcation branches.  The solid line represents stable bifurcating solution $(v_k(s,x),\lambda_k(s),\epsilon_k(s))$ and the dashed line represents unstable solution.}
\end{figure}

\end{proof}

Thanks to (\ref{12}), there always exist nonconstant positive solutions to (\ref{1}) for each $\epsilon$ being small  However, according to Proposition 1 and Theorem \ref{thm31}, the small-amplitude bifurcating solutions $(v_k(s,x),\lambda_k(s),\epsilon_k(s))$ are unstable for $\epsilon_k$ being sufficiently small.  Therefore, we are motivated to find positive solutions to (\ref{1}) that have large amplitude.

%For $\epsilon$ small, $v_\epsilon$ approaches to infinity in $\mathcal{X}$ norm.  We speculate that the continuum of each bifurcation branch tends to infinity in the positive direction of $\mathcal{X}\times \mathbb{R}$ axis as $\epsilon$ approaches zero.  This involves the structure of the bifurcation and it needs information on the detailed expression of $v_\epsilon(\lambda_\epsilon,x)$, which is an intriguing and important problem to work on.

\section{Existence of transition-layer solutions}
In this section, we show that, for $\epsilon$ being sufficiently small, system (\ref{1}) always admits solutions with a single transition layer, which is an approximation of a step-function over $[0,L]$.  For the simplicity of calculations, we assume that $b_1=0$ and consider the following system throughout the section.
\begin{equation}\label{40}
\left\{
\begin{array}{ll}
\epsilon v''+(a_2-\frac{b_2 \lambda}{1+v}-c_2v)v=0,&x \in (0,L),     \\
v'(0)=v'(L)=0,\\
\int_0^L \frac{a_1-c_1v}{1+v}dx= 0.
\end{array}
\right.
\end{equation}
Our first approach is to construct the transition-layer solution $v_{\epsilon}(\lambda,x)$ of (\ref{40}) without the integral constraint, with $\lambda$ being fixed and $\epsilon$ being sufficiently small.   We then proceed to find $\lambda=\lambda_\epsilon$ and $v_\epsilon(\lambda_\epsilon,x)$ such that the integral condition is satisfied.  In particular, we are concerned with $v_\epsilon(x)$ that has a single transition layer over $(0,L)$, and we can construct solutions with multiple layers by reflection and periodic extensions of $v_\epsilon(x)$ at $x=...,-2L,-L,0,L,2L,...$

To this end, we first study the following equation
\begin{equation}\label{41}
\left\{
\begin{array}{ll}
\epsilon v''+f(\lambda, v)=0,&x \in (0,L),     \\
v(x)>0,& x\in(0,L),\\
v'(0)=v'(L)=0,
\end{array}
\right.
\end{equation}
where \[f(\lambda,v)=\Big(a_2-\frac{b_2 \lambda}{1+v}-c_2v\Big)v,\]
and $\lambda$ is a positive constant independent of $\epsilon$.

It is easy to see that (\ref{41}) has three constant solutions $\bar v_0(\lambda)=0, \bar v_1(\lambda)\leq \bar v_2(\lambda)$ for each $\lambda \leq \frac{(a_2+c_2)^2}{4b_2c_2}$, where
\begin{equation}\label{42}
\bar v_1(\lambda)=\frac{a_2\!\!-\!\!c_2\!-\!\sqrt{(a_2\!\!+\!\!c_2)^2\!\!-\!\!4b_2c_2\lambda}}{2c_2},~\bar v_2(\lambda)=\frac{a_2\!\!-\!\!c_2\!+\!\sqrt{(a_2\!\!+\!\!c_2)^2\!\!-\!\!4b_2c_2\lambda}}{2c_2}.
\end{equation}
and $0<\bar v_1(\lambda)<\bar v_2(\lambda)$ if and only if
\begin{equation}\label{43}
\lambda \in \Big(\frac{a_2}{b_2}, \frac{(a_2+c_2)^2}{4b_2c_2} \Big),
\end{equation}
 hence we shall assume (\ref{43}) for our analysis from now on.  We also want to note that $\bar v_1(\lambda)=0$ and $\bar v_2(\lambda)=\frac{a_2-c_2}{c_2}$ if $\lambda=\frac{a_2}{b_2}$ and $\bar v_1(\lambda)=\bar v_2(\lambda)=\frac{a_2-c_2}{2c_2}$ if $\lambda=\frac{(a_2+c_2)^2}{4b_2c_2}$.  The constant solutions $0$ and $\bar v_2$ are stable and $\bar v_1$ is unstable in the corresponding time-dependent system of (\ref{41}).  Moreover, for each $\lambda \in \Big(\frac{a_2}{b_2}, \frac{(a_2+c_2)^2}{4b_2c_2} \Big)$, we know that $f(\lambda,v)$ is of Allen-Cahn type and $f_v(\lambda,0)<0, f_v(\lambda,\bar v_2(\lambda))<0$.  It is well known from the phase plane analysis that, for example see \cite{F} or \cite{Ka}, the following system has a unique smooth solution $V_0(z)$,
\begin{equation}\label{44}
\left\{
\begin{array}{ll}
V_0''+f(\lambda,V_0)=0,~z \in \mathbb{R},    \\
V_0(z) \in (0,\bar v_2(\lambda)),~z\in \mathbb{R},\\
V_0(-\infty)=0,~V_0(\infty)=\bar v_2(\lambda),~V_0(0)=\bar v_2(\lambda)/2;
\end{array}
\right.
\end{equation}
moreover, there exist some positive constants $C,\kappa$ dependent on $\lambda$ such that
\begin{equation}\label{45}
\Big\vert \frac{dV_0(z)}{dz}  \Big\vert \leq Ce^{-\kappa \vert z \vert }, ~z\in \mathbb{R}.
\end{equation}
Now we construct an approximation of $v_\epsilon(\lambda,x)$ to (\ref{41}) by using the unique solution to (\ref{44}) following \cite{HS}.  For each fixed $x_0\in(0,L)$, we denote
\[L^*=\min\{x_0,L-x_0\}\]
and choose the cut-off functions $\chi_0(y)$ and $\chi_1(y)$ of class $C^\infty([-L,L])$ as
\begin{equation}\label{46}
\chi_0(y)=
\left\{
\begin{array}{ll}
1,&\vert y \vert \leq L^*/4,    \\
0,&\vert y \vert \geq L^*/2,\\
\in (0,1),&y\in[-L,L],
\end{array}
\right.
\end{equation}
with $\chi_1(y)=0$ if $y\in[-L,0]$ and $\chi_1(y)=\bar v_2(\lambda)(1-\chi_0(y))$ if $y\in[0,L]$.  Set
\begin{equation}\label{46a}
V_\epsilon(\lambda,x)=\chi_0(x-x_0)V_0(\lambda,\frac{x-x_0}{\sqrt{\epsilon}})+\chi_1(x-x_0).
\end{equation}
We shall show that, for each $\lambda \in \Big(\frac{a_2}{b_2}, \frac{(a_2+c_2)^2}{4b_2c_2} \Big)$, (\ref{41}) has a solution $v_\epsilon$ in the form
\[v_\epsilon(\lambda,x)=V_\epsilon(\lambda,x)+\sqrt \epsilon \Psi(\lambda,x),\]
where $\Psi$ is a smooth function on $[0,L]$.  Then $\Psi$ satisfies
\begin{equation}\label{47}
\mathcal{L}_\epsilon \Psi+\mathcal{G}_\epsilon+\mathcal{H}_\epsilon=0,
\end{equation}
with
\begin{equation}\label{48}
\mathcal{L}_\epsilon=\epsilon\frac{d^2}{dx^2}+f_v(\lambda,V_\epsilon(\lambda,x)),
\end{equation}

\begin{equation}\label{49}
\mathcal{G}_\epsilon=\epsilon^{-\frac{1}{2}} \Big(\epsilon \frac{d^2 V_\epsilon(\lambda,x)}{dx^2}+f(\lambda,V_\epsilon(\lambda,x)) \Big)
\end{equation}
and
\begin{equation}\label{50}
\begin{array}{ll}
\mathcal{H}_\epsilon&=\epsilon^{-\frac{1}{2}} \Big(f(\lambda,V_\epsilon(\lambda,x)+\sqrt \epsilon\Psi)-f(\lambda,V_\epsilon(\lambda,x))-\sqrt\epsilon \Psi f_v(\lambda,V_\epsilon(\lambda,x))\Big)\\
&=\Big(\frac{b_2\lambda}{(1+V_\epsilon(\lambda,x))^2(1+V_\epsilon(\lambda,x)+\sqrt \epsilon \Psi)}-c_2 \Big)\sqrt{\epsilon}\Psi^2.
\end{array}
\end{equation}
As we can see from above, $\mathcal{G}_\epsilon$ and $\mathcal{H}_\epsilon$ measure the accuracy that $V_\epsilon(\lambda,x)$ approximates the solution $v_\epsilon(\lambda,x)$.  We have the following lemmas on these estimates.

\begin{lemma}\label{lem41}
Suppose that $\lambda \in \Big(\frac{a_2}{b_2}+\delta, \frac{(a_2+c_2)^2}{4b_2c_2}-\delta \Big)$ for $\delta>0$ small.  There exist positive constants $C_1=C_1(\delta)>0$ and $\epsilon=\epsilon_1(\delta)>0$ which is small such that, for all $\epsilon\in(0,\epsilon_1(\delta))$
\[\sup_{x\in(0,L)} \vert\mathcal{G}_\epsilon (x)\vert \leq  C_1.  \]
\end{lemma}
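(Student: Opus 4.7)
The plan is to expand $\epsilon V_\epsilon''(\lambda,x)+f(\lambda,V_\epsilon(\lambda,x))$ directly via the product rule applied to (\ref{46a}) and exploit the fact that $V_0$ itself kills this expression in the core region where $\chi_0\equiv1$. Writing $y=x-x_0$ and $z=y/\sqrt\epsilon$, a short calculation yields
\begin{equation*}
\epsilon V_\epsilon''+f(\lambda,V_\epsilon)=\chi_0(y)V_0''(z)+2\sqrt\epsilon\,\chi_0'(y)V_0'(z)+\epsilon\chi_0''(y)V_0(z)+\epsilon\chi_1''(y)+f\bigl(\lambda,\chi_0(y)V_0(z)+\chi_1(y)\bigr),
\end{equation*}
and (\ref{44}) gives $V_0''(z)=-f(\lambda,V_0(z))$, so the first and last terms partially cancel.

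I would then partition $[0,L]$ according to the support of $\chi_0$. On the core $|y|\le L^*/4$ one has $\chi_0\equiv1$, $\chi_0'=\chi_0''\equiv 0$, and $\chi_1\equiv 0$, so the expression collapses to $-f(\lambda,V_0)+f(\lambda,V_0)=0$. On the outer region $|y|\ge L^*/2$, $\chi_0\equiv 0$ and $V_\epsilon$ is identically one of the stable roots $0$ or $\bar v_2(\lambda)$ of $f(\lambda,\cdot)$, so again the whole expression is $0$. The only real work is on the transition annulus $L^*/4<|y|<L^*/2$.

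On that annulus $|z|\ge L^*/(4\sqrt\epsilon)$, which is large. I would use (\ref{45}) together with its integrated version $|V_0(z)-\bar v_2(\lambda)|\le(C/\kappa)e^{-\kappa z}$ for $z>0$ and $|V_0(z)|\le (C/\kappa)e^{\kappa z}$ for $z<0$, both obtained by integrating the estimate (\ref{45}) against the boundary conditions in (\ref{44}). The term $2\sqrt\epsilon\,\chi_0'V_0'$ is then super-exponentially small in $\epsilon$; the terms $\epsilon\chi_0''V_0$ and $\epsilon\chi_1''$ are $O(\epsilon)$ since $V_0$ is bounded and the cut-offs are fixed; finally, using the Lipschitz continuity of $f(\lambda,\cdot)$ on compact intervals, both $f(\lambda,V_0(z))$ and $f(\lambda,\chi_0V_0+\chi_1)$ are within exponentially small distance of the nearby root $0$ or $\bar v_2(\lambda)$, at which $f$ vanishes. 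Combining these estimates gives $|\epsilon V_\epsilon''+f(\lambda,V_\epsilon)|\le C\epsilon$ on the transition annulus, and hence on all of $(0,L)$.

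Multiplying by $\epsilon^{-1/2}$ then yields $|\mathcal{G}_\epsilon(x)|\le C\sqrt\epsilon\le C_1$, uniformly in $x$. Uniformity in $\lambda\in\bigl(\tfrac{a_2}{b_2}+\delta,\tfrac{(a_2+c_2)^2}{4b_2c_2}-\delta\bigr)$ comes in through the constants $C$ and $\kappa$ in (\ref{45}): on this compact sub-interval the two roots $\bar v_1(\lambda)<\bar v_2(\lambda)$ stay strictly separated and $f_v(\lambda,0)$, $f_v(\lambda,\bar v_2(\lambda))$ remain bounded away from $0$, which is exactly what powers the phase-plane proof of (\ref{45}); hence $C$ and $\kappa$ can be chosen depending only on $\delta$. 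The main subtlety in the argument is not any single estimate but the bookkeeping on the transition annulus and confirming that the $\lambda$-dependence of the decay constants in (\ref{45}) is uniform once $\lambda$ is pushed $\delta$ away from the endpoints of (\ref{43}) — which is precisely why the hypothesis of the lemma is formulated with this $\delta$.
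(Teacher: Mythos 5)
Your proposal is correct and follows essentially the same route as the paper: substitute the ansatz (\ref{46a}), cancel via the profile equation (\ref{44}), split $[0,L]$ according to the supports of the cut-offs, and use the exponential decay (\ref{45}) on the transition annulus where $\chi_0'\neq 0$. The only (harmless) difference is bookkeeping: the paper lumps the cut-off derivative terms into a single $O(\sqrt\epsilon)$ remainder, whereas you note the $\sqrt\epsilon\,\chi_0'V_0'$ term is actually exponentially small on the annulus, which yields the slightly sharper bound $\vert\mathcal{G}_\epsilon\vert\leq C\sqrt\epsilon$, more than enough for the stated conclusion.
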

\begin{proof}
By substituting $V_\epsilon(\lambda,x)=\chi_0(x-x_0)V_0(\lambda,\frac{x-x_0}{\sqrt{\epsilon}})+\chi_1(x-x_0)$ into $\mathcal{G}_\epsilon (x)$, we have from (\ref{44}) that
\begin{equation}\label{51}
\begin{array}{ll}
&\epsilon\frac{d^2V_\epsilon(\lambda,x)}{dx^2}+f(\lambda,V_\epsilon(\lambda,x))\\
&=f\big(\lambda,V_\epsilon\big)-\chi_0f\big(\lambda,V_0\big)+\epsilon\chi''_0V_0+2\sqrt{\epsilon}\chi'_0V'_0+\epsilon \chi''_1\\
&=f\big(\lambda,V_\epsilon\big)-\chi_0f\big(\lambda,V_0\big)+O(\sqrt{\epsilon})\\
&=f\big(\lambda,\chi_0(x-x_0)V_0(\lambda,(x-x_0)/\sqrt{\epsilon})+\chi_1(x-x_0)\big)\\
&\hspace{0.2in}-\chi_0f\big(\lambda,V_0(\lambda,(x-x_0)/\sqrt{\epsilon}) \big)+O(\sqrt{\epsilon}).
\end{array}
\end{equation}
We claim that $\vert f\big(\lambda,V_\epsilon\big)-\chi_0f\big(\lambda,V_0\big) \vert=O(\sqrt{\epsilon})$ and we divide our discussions into several cases.  For $\vert x-x_0 \vert \leq L^*/4$, it follows easily from the definitions of $\chi_0$ and $\chi_1$ that $f\big(\lambda,V_\epsilon\big)-\chi_0 f\big(\lambda,V_0\big)=0$ .  For $x-x_0\geq L^*/2$ and $x-x_0\leq -L^*/2$, we have that $\chi_0=0,\chi_1=1$ and $\chi_0=\chi_1=0$ respectively.   Hence $f\big(\lambda,V_\epsilon\big)-\chi_0f\big(\lambda,V_0\big)=0$ in both cases.  For $\vert x-x_0 \vert \in (L^*/4, L^*/2)$, since $V_0$ decays exponentially to $\bar v_2(\lambda)$ at $\infty$ and to $0$ at $-\infty$, we must have that, there exists a positive constant $C$ which is uniform in $\epsilon$ such that
\[\vert f\big(\lambda,V_\epsilon\big)-\chi_0f\big(\lambda,V_0\big) \vert \leq C\sqrt{\epsilon}.\]
This proves our claim and Lemma \ref{lem41} follows from (\ref{51}).
\end{proof}
The following properties of $\mathcal{H}_\epsilon$ also follows from straightforward calculations.
\begin{lemma}\label{lem42}
Suppose that $\lambda \in \Big(\frac{a_2}{b_2}+\delta, \frac{(a_2+c_2)^2}{4b_2c_2}-\delta \Big)$ for $\delta>0$ small.  For any $R>0$, there exists $C_2=C_2(\delta,R)>0$ and $\epsilon_2=\epsilon_2(\delta,R)>0$ small such that, if $\epsilon\in (0,\epsilon_2)$ and $\Vert \Psi_i \Vert_\infty\leq R$, $i=1,2$, we have that
\begin{equation}\label{52}
\Vert\mathcal{H}_\epsilon [\Psi_i]\Vert_\infty \leq C_2\sqrt \epsilon \Vert \Psi_i^2 \Vert_\infty,
\end{equation}
\begin{equation}\label{53}
\Vert\mathcal{H}_\epsilon [\Psi_1]-\mathcal{H}_\epsilon [\Psi_2]\Vert_\infty \leq C_2 \sqrt{\epsilon} \Vert \Psi_1^2-\Psi_2 ^2\Vert_\infty.
\end{equation}
\end{lemma}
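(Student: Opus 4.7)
The plan is to exploit the explicit closed form of $\mathcal{H}_\epsilon$ recorded in the second line of (\ref{50}),
\[
\mathcal{H}_\epsilon[\Psi](x) = K_\epsilon(\Psi,x)\,\sqrt{\epsilon}\,\Psi^2,\qquad K_\epsilon(\Psi,x):=\frac{b_2\lambda}{(1+V_\epsilon(\lambda,x))^2\,(1+V_\epsilon(\lambda,x)+\sqrt{\epsilon}\Psi)} - c_2,
\]
so that both estimates reduce to elementary $L^\infty$-control of the rational coefficient $K_\epsilon$ on the ball $\{\Vert \Psi\Vert_\infty \leq R\}$.

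First I would establish a uniform bound on $K_\epsilon$. Since the cutoff definitions in (\ref{46}) together with $V_0(\lambda,\cdot)\in(0,\bar v_2(\lambda))$ guarantee $V_\epsilon\in[0,\bar v_2(\lambda)]$ on $[0,L]$, one has $1+V_\epsilon\geq 1$. Choosing $\epsilon_2(\delta,R)$ so small that $\sqrt{\epsilon_2}\,R\leq 1/2$ then yields $1+V_\epsilon+\sqrt{\epsilon}\Psi\geq 1/2$ for every admissible $\Psi$ and every $\epsilon\in(0,\epsilon_2)$. The hypothesis $\lambda\in(\frac{a_2}{b_2}+\delta,\frac{(a_2+c_2)^2}{4b_2c_2}-\delta)$ further keeps $b_2\lambda$ in a compact subset of $(0,\infty)$, so that $\Vert K_\epsilon(\Psi,\cdot)\Vert_\infty\leq M(\delta,R)$ independently of $\epsilon$ and $\Psi$. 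Multiplying the pointwise identity for $\mathcal{H}_\epsilon[\Psi_i]$ by $\sqrt{\epsilon}$ and taking supremum norms gives (\ref{52}) with $C_2=M$.

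For the Lipschitz estimate (\ref{53}) I would telescope
\[
\mathcal{H}_\epsilon[\Psi_1] - \mathcal{H}_\epsilon[\Psi_2] = \sqrt{\epsilon}\,K_\epsilon(\Psi_2,x)\,(\Psi_1^2 - \Psi_2^2) + \sqrt{\epsilon}\,[K_\epsilon(\Psi_1,x)-K_\epsilon(\Psi_2,x)]\,\Psi_1^2.
\]
The first summand is dominated by $M\sqrt{\epsilon}\,\Vert\Psi_1^2-\Psi_2^2\Vert_\infty$ at once. For the second, a direct calculation produces the identity
\[
K_\epsilon(\Psi_1,x)-K_\epsilon(\Psi_2,x) = -\frac{b_2\lambda\,\sqrt{\epsilon}\,(\Psi_1-\Psi_2)}{(1+V_\epsilon)^2(1+V_\epsilon+\sqrt{\epsilon}\Psi_1)(1+V_\epsilon+\sqrt{\epsilon}\Psi_2)},
\]
which the same lower bound shows is $O(\sqrt{\epsilon}\,\Vert\Psi_1-\Psi_2\Vert_\infty)$; when paired with $\Vert\Psi_1^2\Vert_\infty\leq R^2$ it yields a remainder of order $\epsilon R^2\Vert\Psi_1-\Psi_2\Vert_\infty$, which, after shrinking $\epsilon_2$ and enlarging $C_2=C_2(\delta,R)$, is absorbed into a multiple of $\sqrt{\epsilon}\,\Vert\Psi_1^2-\Psi_2^2\Vert_\infty$.

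The main obstacle I anticipate is the delicate bookkeeping that maintains a uniform positive lower bound on the denominator $1+V_\epsilon+\sqrt{\epsilon}\Psi$ using only the $R$-ball constraint on $\Psi$; this is precisely what forces the smallness threshold $\epsilon_2$ to depend on $R$. Once that lower bound is in hand, both inequalities reduce to routine quotient-rule manipulations of a purely algebraic nature, entirely independent of the finer profile properties of $V_\epsilon$ and $V_0$ used in Lemma \ref{lem41}.
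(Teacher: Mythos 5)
Your estimate (\ref{52}) and the telescoping decomposition are correct, and in spirit this is exactly the paper's argument: the paper's proof is a one-liner taking $C_2=b_2\lambda+2c_2$ and appealing to the explicit formula in (\ref{50}), implicitly using the same two facts you isolate, namely $V_\epsilon\geq 0$ and $\sqrt{\epsilon}\,\Vert\Psi\Vert_\infty\leq\sqrt{\epsilon}\,R$ small enough to keep the denominator bounded away from zero.

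The genuine gap is your final absorption step for (\ref{53}). The remainder you produce has size $\epsilon R^2\Vert\Psi_1-\Psi_2\Vert_\infty$, and no shrinking of $\epsilon_2$ or enlarging of $C_2$ lets you dominate it by $C_2\sqrt{\epsilon}\,\Vert\Psi_1^2-\Psi_2^2\Vert_\infty$: the right-hand side can vanish while the remainder does not. Indeed, take $\Psi_2=-\Psi_1\not\equiv 0$ with $\Vert\Psi_1\Vert_\infty\leq R$; then $\Psi_1^2-\Psi_2^2\equiv 0$, but at any point where $\Psi_1\neq 0$ the denominators $1+V_\epsilon+\sqrt{\epsilon}\Psi_1$ and $1+V_\epsilon+\sqrt{\epsilon}\Psi_2$ differ, so $\mathcal{H}_\epsilon[\Psi_1]-\mathcal{H}_\epsilon[\Psi_2]\neq 0$ there. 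Thus the absorption cannot be carried out, and your computation in fact shows that inequality (\ref{53}) in the literal form printed in the paper has the same defect, which the paper's terse proof glosses over. The honest output of your telescoping is the two-term bound
\[
\Vert\mathcal{H}_\epsilon[\Psi_1]-\mathcal{H}_\epsilon[\Psi_2]\Vert_\infty \leq C(\delta,R)\sqrt{\epsilon}\,\bigl(\Vert\Psi_1^2-\Psi_2^2\Vert_\infty + \sqrt{\epsilon}\,R^2\,\Vert\Psi_1-\Psi_2\Vert_\infty\bigr),
\]
or, combining it with $\Vert\Psi_1^2-\Psi_2^2\Vert_\infty\leq 2R\Vert\Psi_1-\Psi_2\Vert_\infty$, the Lipschitz estimate $\Vert\mathcal{H}_\epsilon[\Psi_1]-\mathcal{H}_\epsilon[\Psi_2]\Vert_\infty\leq C(\delta,R)\sqrt{\epsilon}\,\Vert\Psi_1-\Psi_2\Vert_\infty$. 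Either version is what the contraction argument in Proposition 2 actually requires, so you should state and prove one of these rather than assert the absorption into $\Vert\Psi_1^2-\Psi_2^2\Vert_\infty$.
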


\begin{proof} Taking $C_2=b_2\lambda+2c_2 $, we can easily show that (\ref{52}) and (\ref{53}) follows from the definition of $\mathcal{H}_\epsilon$ in (\ref{50}).
\end{proof}
We also need the following properties of the linear operator $\mathcal{L}_\epsilon$ defined in (\ref{48}).

\begin{lemma}\label{lem43}
Suppose that $\lambda \in \Big(\frac{a_2}{b_2}+\delta, \frac{(a_2+c_2)^2}{4b_2c_2}-\delta \Big)$ for $\delta>0$ small.  For any $p\in[1,\infty]$, there exist $C_3=C_3(\delta,p)>0$ and $\epsilon_3=\epsilon_3(\delta,p)>0$ small such that, $\mathcal{L}_\epsilon$ with domain $W^{2,p}(0,L)$ has a bounded inverse $\mathcal{L}^{-1}_\epsilon$ and if $\epsilon \in (0,\epsilon_3(\delta,p))$
\[ \Vert \mathcal{L}^{-1}_\epsilon g \Vert_p \leq C_3 \Vert g \Vert_p ,~\forall g\in L^p(0,L).\]
\end{lemma}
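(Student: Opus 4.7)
The plan is to establish Lemma \ref{lem43} via Fredholm theory combined with a singular-perturbation contradiction argument that exploits the two natural scales $x$ and $z=(x-x_0)/\sqrt{\epsilon}$.

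First, I would show that $\mathcal{L}_\epsilon$, acting from the Neumann domain $\{w\in W^{2,p}(0,L):w'(0)=w'(L)=0\}$ into $L^p(0,L)$, is Fredholm of index zero. It differs from $\epsilon\,d^2/dx^2$ (with Neumann conditions) only by the bounded multiplication operator $f_v(\lambda,V_\epsilon)$, and multiplication by a bounded function is compact $W^{2,p}\to L^p$ on the bounded interval via Rellich--Kondrachov. Consequently, Fredholmness of index zero follows from standard perturbation theory for elliptic operators, and both the invertibility and the uniform norm bound reduce to the single a priori estimate $\|\psi\|_p\leq C_3\|\mathcal{L}_\epsilon\psi\|_p$ for all $\psi$ in the domain, with $C_3$ independent of $\epsilon\in(0,\epsilon_3)$.

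I would prove this estimate by contradiction. Suppose there exist $\epsilon_n\downarrow 0$ and $\psi_n$ in the domain with $\|\psi_n\|_p=1$ and $g_n:=\mathcal{L}_{\epsilon_n}\psi_n$ satisfying $\|g_n\|_p\to 0$. On the outer region $\{|x-x_0|\geq L^*/2\}$ the cut-offs $\chi_0,\chi_1$ force $V_{\epsilon_n}\in\{0,\bar v_2(\lambda)\}$ identically, so $f_v(\lambda,V_{\epsilon_n})\leq -c(\delta)<0$ uniformly; combined with the Neumann condition at $x=0,L$, a maximum-principle comparison against an exponential barrier of width $\sqrt{\epsilon_n/c(\delta)}$ (or a weighted Agmon-type energy estimate) shows $\psi_n$ is exponentially small in $\epsilon_n$ throughout the outer region, so its $L^p$-mass concentrates near $x_0$. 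On the inner region I would pass to the stretched variable $z=(x-x_0)/\sqrt{\epsilon_n}$ and consider $\tilde\psi_n(z):=\psi_n(x_0+\sqrt{\epsilon_n}z)$, normalized so that $\|\tilde\psi_n\|_{L^p_{\mathrm{loc}}(\mathbb{R})}$ is bounded and bounded away from zero. Uniform $W^{2,p}_{\mathrm{loc}}$ bounds and compactness produce, along a subsequence, a limit $\tilde\psi_\infty$ solving the heteroclinic linearization $\tilde\psi_\infty''+f_v(\lambda,V_0(z))\,\tilde\psi_\infty=0$ on $\mathbb{R}$, whose bounded $L^p$-kernel is one-dimensional and spanned by $V_0'$; hence $\tilde\psi_\infty=\alpha V_0'$ for some scalar $\alpha$.

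The main obstacle is ruling out $\alpha\neq 0$: this is the classical approximate-kernel difficulty for transition-layer linearizations, since $V_0'((x-x_0)/\sqrt{\epsilon})$ is an exact element of the kernel of $\mathcal{L}_\epsilon$ on $|x-x_0|\leq L^*/4$. I would close the gap by testing the identity $\mathcal{L}_{\epsilon_n}\psi_n=g_n$ against a carefully chosen dual element---a localized modification of $V_0'((x-x_0)/\sqrt{\epsilon_n})$ designed so that, after integrating by parts twice using the Neumann conditions on $\psi_n$ and the compact support of the test function, the exact identity $V_0'''+f_v(\lambda,V_0)V_0'=0$ does \emph{not} collapse the leading-order contribution, but instead yields a solvability identity of the form $\alpha\,\|V_0'\|_{L^2(\mathbb{R})}^2\,(1+o(1))=o(1)$, forcing $\alpha=0$. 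Combined with the outer-region exponential decay, this yields $\psi_n\to 0$ in $L^p(0,L)$, contradicting $\|\psi_n\|_p=1$. The delicate point is precisely the construction of this test function: one must use the cut-off geometry and the Neumann boundary conditions at $x=0,L$ to break the symmetry that would otherwise render the scalar identity trivially $0=0$.
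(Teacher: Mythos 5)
Your overall scheme---Fredholm reduction to the a priori bound $\Vert\psi\Vert_p\leq C_3\Vert\mathcal{L}_\epsilon\psi\Vert_p$, a contradiction sequence, an outer barrier estimate, and an inner blow-up producing a bounded solution of $\tilde\psi''+f_v(\lambda,V_0)\tilde\psi=0$ whose kernel is spanned by $V_0'$---parallels the paper's argument, which is likewise a rescaling/contradiction proof. The genuine gap is the step you yourself flag: excluding $\tilde\psi_\infty=\alpha V_0'$ with $\alpha\neq0$. You never construct the ``localized dual element,'' and in fact no integral solvability identity of the kind you postulate can be extracted from $\mathcal{L}_{\epsilon_n}\psi_n=g_n$ alone. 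Consider $\psi_\epsilon(x)=\chi_0(x-x_0)\,V_0'\big(\lambda,(x-x_0)/\sqrt{\epsilon}\big)$ with $\chi_0$ as in (\ref{46}). It vanishes identically near $x=0$ and $x=L$ (because $|x-x_0|\geq L^*$ there), so it belongs to the Neumann domain of $\mathcal{L}_\epsilon$; on $\{|x-x_0|\leq L^*/4\}$ one has $V_\epsilon=V_0$ and, by the differentiated equation (\ref{56}), $\mathcal{L}_\epsilon\psi_\epsilon\equiv 0$ there; on the cut-off region every term involves $V_0'$, $V_0''$ or $V_0'''$ evaluated at $|z|\geq L^*/(4\sqrt{\epsilon})$, hence is $O\big(e^{-\kappa L^*/(4\sqrt{\epsilon})}\big)$ by (\ref{45}) and its analogues for higher derivatives. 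Therefore $\Vert\mathcal{L}_\epsilon\psi_\epsilon\Vert_p/\Vert\psi_\epsilon\Vert_p\to0$ for every $p\in[1,\infty]$, and $\psi_{\epsilon_n}/\Vert\psi_{\epsilon_n}\Vert_p$ is an admissible sequence for your contradiction argument whose inner limit is a \emph{nonzero} multiple of $V_0'$. For this sequence any quantity obtained by testing the equation is exponentially small, so the identity $\alpha\Vert V_0'\Vert_{L^2}^2(1+o(1))=o(1)$ you hope to force simply cannot be derived; your worry that the computation collapses to $0=0$ is exactly right, and it is not a matter of choosing the cut-off geometry more cleverly---the approximate translation kernel is intrinsic to $\mathcal{L}_\epsilon$.

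The paper handles this point by entirely different means, not by a solvability identity: it normalizes $\sup\Phi_i=1$, uses the maximum principle together with $f_v(\lambda,0)<0$, $f_v(\lambda,\bar v_2)<0$ to show the maximum point satisfies $|x_i-x_0|\leq C_0\sqrt{\epsilon_i}$, passes to a finite limit point $z_0$ with $\tilde\Phi_0(z_0)=1$, $\tilde\Phi_0'(z_0)=0$, and then applies a Wronskian computation against $V_0'$ on the half-line $(-\infty,z_0]$, i.e.\ it exploits pointwise information at the maximum rather than an orthogonality condition; the uniform bound for general $p$ is then deduced from a uniform positive lower bound on the eigenvalues of (\ref{57}) plus Marcinkiewicz interpolation. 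Note that even that route is delicate precisely where you got stuck: the Wronskian identity gives $\tilde\Phi_0(z_0)V_0''(z_0)=0$ and thus needs $V_0''(z_0)\neq0$, which degenerates exactly when $\tilde\Phi_0$ is proportional to $V_0'$. So the obstruction you identified is the true crux of the lemma, and to complete a proof along your lines you would need either to import the maximum-point/maximum-principle input used in the paper, or to abandon the uniform-inverse formulation and work modulo the localized translation mode (a Lyapunov--Schmidt reduction in the spirit of \cite{HS}), treating that one-dimensional direction separately.
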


\begin{proof}
To show that $\mathcal{L}_\epsilon$ is invertible, it is sufficient to show that $\mathcal{L}_\epsilon$ defined on $L^p(0,L)$ with the domain $W^{2,p}(0,L)$ has only trivial kernel.  Our proof is quite similar to that of Lemma 5.4 presented by Lou and Ni in \cite{LN2}.  We argue by contradiction.  Take a sequence $\{(\epsilon_i,\lambda_i)\}_{i=1}^\infty$ such that $\epsilon_i \rightarrow 0$ and $\lambda_i \rightarrow \lambda \in \Big(\frac{a_2}{b_2}+\delta, \frac{(a_2+c_2)^2}{4b_2c_2}-\delta \Big)$ as $i\rightarrow \infty$.  Without loss of our generality, we assume that there exists $\Phi_i\in W^{2,p}(0,L)$ satisfying
\begin{equation}\label{54}
\left\{
\begin{array}{ll}
\epsilon_i \frac{d^2\Phi_{i}}{dx^2}+f_v(\lambda_i,V_{\epsilon_i}(\lambda_i,x))\Phi_i=0,x\in(0,L),\\
\Phi_i'(0)=\Phi_i'(L)=0,\\
\sup_{x\in(0,L)}   \Phi_i(x)  =1.
\end{array}
\right.
\end{equation}
Let
\[\tilde{\Phi}_i(z)=\Phi_i(x_0+\sqrt{\epsilon_i}z),~\tilde{V}_{\epsilon_i}(\lambda_i,z)=V_{\epsilon_i}(\lambda_i,x_0+\sqrt{\epsilon_i}z),\]
for all $z\in\big(x_0-\frac{1}{\sqrt{\epsilon_i}},x_0+\frac{1}{\sqrt{\epsilon_i}}\big)$, $i=1,2,$..., then
\[\frac{d^2\tilde\Phi_{i}}{dz^2}+f_v(\lambda_{i},\tilde V_{\epsilon_i}(\lambda_{i},z))\tilde\Phi_{i}=0,~z\in \Big(x_0-\frac{1}{\sqrt{\epsilon_i}},x_0+\frac{1}{\sqrt{\epsilon_i}}\Big).\]
It is easy to know that both $f_v(\lambda_i, \tilde V_{\epsilon_i})$ and $\tilde\Phi_i$ are bounded in $\big(x_0-\frac{1}{\sqrt{\epsilon_i}},x_0+\frac{1}{\sqrt{\epsilon_i}}\big)$, therefore we have from the elliptic regularity and a diagonal argument that, after passing to a subsequence if necessary as $i \rightarrow \infty$, $\tilde \Phi_i$ converges to some $\tilde \Phi_0$ in $C^1(\mathbb{R}_c)$ for any compact subset $\mathbb{R}_c$ of $\mathbb{R}$; moreover, $\tilde\Phi_0$ is a $C^\infty$--smooth and bounded function and it satisfies
\begin{equation}\label{55}
\frac{d^2\tilde\Phi_0 }{dz^2}+f_v(\lambda,V_0(\lambda,z))\tilde\Phi_0=0,z\in \mathbb{R},
\end{equation}
where $V_0(\lambda,z)$ is the unique solution of (\ref{44}).

Assume that $\Phi_i(x_i)=1$ for $x_i\in[0,L]$, then we have that $f_v(\lambda_i,V_{\epsilon_i}(\lambda,x_i))\geq 0$ according to the Maximum Principle.  It follows from the decaying properties of $V_\epsilon$ that $\vert z_i\vert =\vert \frac{x_i-x_0}{\sqrt{\epsilon_i}}\vert \leq C_0 $ for some $C_0$ independent of $\epsilon_i$.  Actually, if not and we assume that there exists a sequence $z_i \rightarrow \pm \infty$ as $\epsilon_i \rightarrow 0$, then it easily implies that
$\tilde V_{\epsilon_i}\big(\lambda, z_i \big) \rightarrow \bar v_2$ and $0$ respectively.  On the other hand, we have that $f_v(\lambda, \bar v_2)<0$ and $f_v(\lambda, 0)<0$, hence $f_v(\lambda,V_{\epsilon_i} (\lambda, x_i ))=f_v(\lambda,\tilde V_{\epsilon_i} (\lambda, z_i ))<0$ for all $\epsilon_i$ small and we reach a contradiction.  Therefore we have that $z_i$ is bounded for all $\epsilon_i$ small as claimed and we can always find some $z_0\in \mathbb{R}$ such that $z_i\rightarrow z_0$ and
\[\tilde\Phi_0(z_0)=\sup_{z\in\mathbb{R}}\tilde\Phi_0(z)=1,~\tilde\Phi_0'(z_0)=0.\]

On the other hand, we differentiate equation (\ref{44}) with respect to $z$ and obtain that
\begin{equation}\label{56}
\frac{d^2 V'_0}{dz^2}+f_v(\lambda,V_0(\lambda,z))V'_0=0,
\end{equation}
where $V'_0=\frac{dV_0}{dz}$.  Multiplying (\ref{56}) by $\tilde \Phi_0$ and (\ref{57}) by $V'_0$ and then integrating them over $(-\infty,z_0)$ by parts, we obtain that
\[0=\int_{-\infty}^{z_0} \tilde\Phi_0''V'_0- (V''_0)'\tilde\Phi_0 dz=\tilde\Phi_0'(z)V'_0(z)\big \vert_{-\infty}^{z_0}-\tilde\Phi_0(z)V''_0(z)\big \vert_{-\infty}^{z_0},\]
then we can easily show that $\tilde\Phi_0(z_0)=0$ and this is a contradiction.  Therefore, we have prove in invertibility of $\mathcal{L}_\epsilon$ and we denote it inverse by $\mathcal{L}^{-1}_\epsilon$.

To show that $\mathcal{L}^{-1}_\epsilon$ is uniformly bounded for all $p\in[1,\infty]$, it suffices to prove it for $p=2$ thanks to the Marcinkiewicz interpolation Theorem.
We consider the following eigen-value problem
\begin{equation}\label{57}
\left\{
\begin{array}{ll}
\mathcal{L}_\epsilon \varphi_{i,\epsilon}=\mu_{i,\epsilon} \varphi_{i,\epsilon},~x\in(0,L),\\
\varphi_{i,\epsilon}'(0)=\varphi_{i,\epsilon}'(L)=0,\\
\sup_{x\in(0,L)}   \varphi_{i,\epsilon}(x)  =1.
\end{array}
\right.
\end{equation}
By applying the same analysis as above, we can show that for each $\lambda\in\Big(\frac{a_2}{b_2}, \frac{(a_2+c_2)^2}{4b_2c_2} \Big)$, there exists a constant $C(\lambda)>0$ independent of $\epsilon$ such that $\mu_{i,\epsilon}\geq C(\lambda)$ for all $\epsilon$ sufficiently small.  Therefore
\[\Vert \mathcal{L}_\epsilon^{-1} g\Vert_{2} =\big\Vert \sum_{j=1}^\infty \frac{<g,\varphi_{i,\epsilon}>}{\mu_{i,\epsilon}} \varphi_{i,\epsilon} \big\Vert_{2}\leq \frac{1}{C(\lambda)} \Vert g\Vert_{2},\]
where $<\cdot,\cdot>$ denotes the inner product in $L^2(0,L)$.  This finishes the proof of Lemma \ref{lem43}.
\end{proof}

\begin{proposition} Let $x_0 \in (0,L)$ be arbitrary.  Suppose that $\lambda \in \Big(\frac{a_2}{b_2}+\delta, \frac{(a_2+c_2)^2}{4b_2c_2}-\delta \Big)$ for $\delta>0$ small.  Then there exists a small $\epsilon_4=\epsilon_4(\delta)>0$ such that for all $\epsilon\in (0,\epsilon_4)$, (\ref{41}) has a family of solutions $v_\epsilon(\lambda,x)$ such that
\[\sup_{x\in(0,L)} \vert v_\epsilon(\lambda,x)-V_\epsilon(\lambda,x) \vert \leq C_4\sqrt \epsilon,\] where $C_4$ is a positive constant independent of $\epsilon$ and $V_\epsilon(\lambda,x)$ is given by (\ref{46a}).  In particular,
\begin{equation}\label{58}
\lim_{\epsilon \rightarrow 0^+} v_\epsilon(\lambda,x)=
\left\{
\begin{array}{ll}
0, \text{ compact uniformly on } [0,x_0)  ,    \\
\bar v_2(\lambda)/2,~x=x_0,\\
\bar v_2(\lambda),\text{ compact uniformly on } (x_0,L].
\end{array}
\right.
\end{equation}
\end{proposition}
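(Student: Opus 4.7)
The plan is to solve (\ref{41}) via the ansatz $v_\epsilon = V_\epsilon + \sqrt\epsilon\,\Psi$ and a Banach fixed-point argument based on the reformulation (\ref{47})--(\ref{50}). Recasting the equation, $\Psi$ must be a fixed point of the map
\[
T_\epsilon(\Psi) := -\mathcal{L}_\epsilon^{-1}\bigl(\mathcal{G}_\epsilon + \mathcal{H}_\epsilon[\Psi]\bigr),
\]
which I would set up on a closed ball of $L^\infty(0,L)$. Since $\chi_0$ and $\chi_1$ are supported inside $(-L^*/2, L^*/2)$, the approximation $V_\epsilon$ is locally constant near both endpoints, so $V_\epsilon'(0) = V_\epsilon'(L) = 0$; consequently one may restrict $\mathcal{L}_\epsilon$ to the Neumann domain, and any fixed point $\Psi_\epsilon$ produces a solution $v_\epsilon$ satisfying the boundary conditions in (\ref{41}).

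First I would set $R := 2 C_1 C_3$ with $C_1 = C_1(\delta)$ from Lemma \ref{lem41} and $C_3 = C_3(\delta,\infty)$ from Lemma \ref{lem43}, and check self-mapping: by Lemmas \ref{lem41}--\ref{lem43},
\[
\|T_\epsilon(\Psi)\|_\infty \;\leq\; C_3\bigl(\|\mathcal{G}_\epsilon\|_\infty + \|\mathcal{H}_\epsilon[\Psi]\|_\infty\bigr) \;\leq\; C_3\bigl(C_1 + C_2\sqrt\epsilon\, R^2\bigr) \;\leq\; R
\]
whenever $\epsilon$ is small enough that $C_2 C_3 \sqrt\epsilon\, R^2 \leq C_1 C_3$. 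Next, for $\Psi_1,\Psi_2 \in \overline{B_R}$, using $\|\Psi_1^2 - \Psi_2^2\|_\infty \leq 2R\|\Psi_1 - \Psi_2\|_\infty$ together with (\ref{53}) and Lemma \ref{lem43}, I would obtain
\[
\|T_\epsilon(\Psi_1) - T_\epsilon(\Psi_2)\|_\infty \;\leq\; 2 C_2 C_3 R\,\sqrt\epsilon\,\|\Psi_1 - \Psi_2\|_\infty \;\leq\; \tfrac12 \|\Psi_1 - \Psi_2\|_\infty
\]
for all $\epsilon$ below a further threshold $\epsilon_4 = \epsilon_4(\delta)$. The Banach fixed-point theorem then provides a unique $\Psi_\epsilon \in \overline{B_R}$ with $T_\epsilon(\Psi_\epsilon) = \Psi_\epsilon$, and $v_\epsilon := V_\epsilon + \sqrt\epsilon\,\Psi_\epsilon$ is a classical solution (by elliptic regularity) of $\epsilon v'' + f(\lambda,v) = 0$ with Neumann boundary conditions, satisfying the stated bound $\|v_\epsilon - V_\epsilon\|_\infty \leq R\sqrt\epsilon =: C_4\sqrt\epsilon$.

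For the limiting profile (\ref{58}), I would argue directly from the construction (\ref{46a}) and the exponential decay (\ref{45}): fixing $x \in [0,x_0)$, $(x-x_0)/\sqrt\epsilon \to -\infty$, so $V_0 \to 0$ and $\chi_1(x-x_0) = 0$; for $x \in (x_0, L]$, $(x-x_0)/\sqrt\epsilon \to +\infty$, so $V_0 \to \bar v_2(\lambda)$ and $\chi_0 V_0 + \chi_1 \to \chi_0\bar v_2(\lambda) + \bar v_2(\lambda)(1-\chi_0) = \bar v_2(\lambda)$; at $x = x_0$, $V_\epsilon(x_0) = V_0(0) = \bar v_2(\lambda)/2$. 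Compact uniform convergence is controlled by (\ref{45}), and the $O(\sqrt\epsilon)$ correction from $\sqrt\epsilon\,\Psi_\epsilon$ is negligible.

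Given the three preparatory lemmas, the fixed-point argument itself is largely mechanical; the main conceptual work was already concentrated in Lemma \ref{lem43}, which provides uniform invertibility of $\mathcal{L}_\epsilon$. The most delicate remaining point is verifying strict positivity of $v_\epsilon$ on $(0,L)$, since $V_\epsilon$ vanishes identically on $[0, x_0 - L^*/2]$ and the $\sqrt\epsilon$ perturbation could \emph{a priori} introduce sign changes there. I would handle this by replacing $f(\lambda, v)$ by $f(\lambda, v_+)$ in the contraction argument so that the same $\Psi_\epsilon$ is produced, and then apply the strong maximum principle together with Hopf's lemma to (\ref{41}): since $v_\epsilon(x_0) \to \bar v_2(\lambda)/2 > 0$, the only alternative to $v_\epsilon > 0$ on $(0,L)$ would be $v_\epsilon \equiv 0$, which is incompatible with the lower bound on a neighborhood of $x_0$.
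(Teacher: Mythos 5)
Your proposal follows essentially the same route as the paper's proof: the ansatz $v_\epsilon=V_\epsilon+\sqrt{\epsilon}\,\Psi$, the contraction mapping $\mathcal{T}_\epsilon[\Psi]=-\mathcal{L}_\epsilon^{-1}(\mathcal{G}_\epsilon+\mathcal{H}_\epsilon[\Psi])$ on the ball of radius $2C_1C_3$ in $C([0,L])$ using Lemmas \ref{lem41}--\ref{lem43} with the same smallness conditions, and the limit profile (\ref{58}) read off from the structure of $V_\epsilon$ in (\ref{46a}) and the exponential decay (\ref{45}). Your extra remarks --- the Neumann compatibility of $V_\epsilon$ near the endpoints and the truncation plus maximum-principle argument for strict positivity of $v_\epsilon$ --- address points the paper leaves implicit, and they are sound.
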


\begin{proof}
We shall establish the existence of $v_\epsilon$ in the form of $v_\epsilon=V_\epsilon+\sqrt \epsilon \Psi$, where $\Psi$ satisfies (\ref{47}).  Then it is equivalent to show the existence of smooth functions $\Psi$.  To this end, we want to apply the contraction mapping theorem to the Banach space $C([0,L])$.  For any $\Psi\in C([0,L])$, we define
\begin{equation}\label{59}
\mathcal{T}_\epsilon[\Psi]=-\mathcal{L}^{-1}_\epsilon(\mathcal{G}_\epsilon+\mathcal{H}[\Psi]).
\end{equation}
Then $\mathcal{T}$ is mapping from $C([0,L])$ to $C([0,L])$ by elliptic regularity.  Moreover, we set
\[\mathcal{B}=\{\Psi\in C([0,L]) \vert  \Vert \Psi \Vert_\infty \leq R_0 \},\]
where $R_0\geq 2C_1C_3$.  By Lemma \ref{lem41} and \ref{lem43}, we have $\Vert \mathcal{L}^{-1} \mathcal{G}_\epsilon \Vert_\infty\leq C_1C_3$.  Therefore, it follows from (\ref{52}) and Lemma \ref{lem42} that, for any $\Psi\in \mathcal{B}$,
\[\Vert \mathcal{T}_\epsilon[\Psi]\Vert_\infty \leq C_1C_3+C_2C_3\sqrt{\epsilon} R_0^2\leq 2C_1C_3\leq R_0,\]
provided that $\epsilon$ is small.  Moreover, it follows from (\ref{53}) and simple calculation that, for any $\Psi_1$ and $\Psi_2$ in $\mathcal{B}$,
\[\Vert \mathcal{T}_\epsilon[\Psi_1]-\mathcal{T}_\epsilon[\Psi_2]\Vert_\infty \leq \frac{1}{2} \Vert \Psi_1-\Psi_2 \Vert_\infty.\]
 if $\epsilon$ is sufficiently small.  Hence $\mathcal{T}_\epsilon$ is a contraction mapping from $\mathcal{B}$ to $\mathcal{B}$ and it follows from the contraction mapping theory that $\mathcal{T}_\epsilon$ has a fixed point $\Psi_\epsilon$ in $\mathcal{B}$, if $\epsilon$ is sufficiently small.  Therefore, $v_\epsilon$ constructed above is a smooth solution of (\ref{41}).  Finally, it is easy to verify that $v_\epsilon$ satisfies (\ref{58}) and this completes the proof of Proposition 2.

\end{proof}

We proceed to employ the solution $v_\epsilon(\lambda,x)$ of (\ref{41}) obtained in Proposition 2 to constructed solutions of (\ref{40}).  Therefore, we want to show that there exists $\lambda=\lambda_\epsilon$ and $(v_\epsilon(\lambda_\epsilon,x), \lambda_\epsilon)$ such that the integral condition in (\ref{40}) is satisfied.

%To this end, we denote the operator $\tilde{\mathcal{L}}_\epsilon$ that is a small perturbation of $\mathcal{L}_\epsilon$ as
%%%\[\tilde{\mathcal{L}}_\epsilon=\epsilon\frac{d^2}{dx^2}+f_v(\lambda,v_\epsilon(\lambda,x)).\]
%%%Since $\sup_{x\in[0,L]} \vert v_\epsilon(\lambda,x)-V_\epsilon(\lambda,x)\vert <C\epsilon$, we have from the standard perturbation arguments that $\tilde{\mathcal{L}}_\epsilon$ is also invertible and has a uniformly bounded inverse $\tilde{\mathcal{L}}^{-1}_\epsilon$ as in Lemma ?.

Now we are ready to present another main result of this paper.

\begin{theorem}\label{thm44}
Assume that $a_2-c_2>\frac{2a_1c_2}{c_1}$ and $b_1 \rightarrow 0 \text{ as }\epsilon \rightarrow 0$.  Denote
\[x_1 = \max \Big\{0,\frac{(a_2-c_2)c_1-2a_1c_2}{(a_1+c_1)(a_2-c_2)}L\Big\},~x_2 =\frac{(a_2-c_2)c_1-a_1c_2}{(a_1+c_1)(a_2-c_2)}L.\]
Then there exists $\epsilon_0>0$ small such that for each $x_0\in(x_1,x_2)$ and $\epsilon \in (0,\epsilon_0)$, system (\ref{1}) admits positive solutions $(v_\epsilon(\lambda_\epsilon,x),\lambda_\epsilon)$ such that
\begin{equation}\label{60}
\lim_{\epsilon \rightarrow 0^+} v_\epsilon(\lambda_\epsilon,x)=
\left\{
\begin{array}{ll}
0,&\text{ compact uniformly on } [0,x_0),\\
\bar v_2(\lambda_0)/2,& x=x_0,\\
\bar v_2(\lambda_0), &\text{ compact uniformly on } (x_0,L],
\end{array}
\right.
\end{equation}
where $\bar v_2(\lambda_0)=\frac{a_1L}{c_1-(a_1+c_1)x_0}\in (\frac{a_2-c_2}{2c_2},\frac{a_2-c_2}{c_2})$, and
\begin{equation}\label{61}
\lim_{\epsilon \rightarrow 0^+} \lambda_\epsilon = \bar \lambda_0=\frac{(a_2-c_2\bar v_0)(1+\bar v_0)}{b_2} \in \Big(\frac{a_2}{b_2},\frac{(a_2+c_2)^2}{4b_2c_2} \Big).
\end{equation}
\end{theorem}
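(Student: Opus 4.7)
The plan is to treat $\lambda$ as a shooting parameter. Proposition 2 already produces, for every fixed $\lambda$ in the open interval $\Lambda_\delta:=(a_2/b_2+\delta,(a_2+c_2)^2/(4b_2c_2)-\delta)$, a single-layer solution $v_\epsilon(\lambda,x)$ of (\ref{41}) satisfying the Neumann boundary conditions, with the layer placed at the prescribed $x_0$. What remains is to select $\lambda=\lambda_\epsilon$ so that the integral constraint in (\ref{40}) is also met: this is one scalar equation in one unknown.

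Set
\[
\Phi_\epsilon(\lambda):=\int_0^L\frac{a_1-c_1 v_\epsilon(\lambda,x)}{1+v_\epsilon(\lambda,x)}\,dx-\int_0^L\frac{b_1(\epsilon)\lambda}{(1+v_\epsilon(\lambda,x))^2}\,dx,
\]
so that $(v_\epsilon(\lambda,\cdot),\lambda)$ solves (\ref{1}) iff $\Phi_\epsilon(\lambda)=0$. Using the profile convergence (\ref{58}) on $[0,x_0)$ and $(x_0,L]$, the uniform $L^\infty$ bound on $v_\epsilon$ coming from the contraction-mapping construction, and the hypothesis $b_1(\epsilon)\to 0$, the dominated convergence theorem will give
\[
\Phi_\epsilon(\lambda)\longrightarrow \Phi_0(\lambda):=a_1\,x_0+(L-x_0)\,\frac{a_1-c_1\bar v_2(\lambda)}{1+\bar v_2(\lambda)}\qquad\text{as }\epsilon\to 0^+,
\]
and this convergence should be uniform on compact subsets of $\Lambda_\delta$, because the contraction operator $\mathcal{T}_\epsilon$ in (\ref{59}) depends continuously on $\lambda$ and the contraction constant is uniform in $\lambda$ on compacta.

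Next, the limiting equation $\Phi_0(\lambda_0)=0$ is linear in $\bar v_2(\lambda_0)$; solving it gives $\bar v_2(\lambda_0)=a_1 L/\bigl(c_1 L-(a_1+c_1)x_0\bigr)$, which is equivalent to the formula stated in the theorem. Imposing the requirement $\bar v_2(\lambda_0)\in\bigl(\tfrac{a_2-c_2}{2c_2},\tfrac{a_2-c_2}{c_2}\bigr)$, which is exactly the range of $\bar v_2$ as $\lambda$ runs through $\Lambda_0:=(a_2/b_2,(a_2+c_2)^2/(4b_2c_2))$, translates into the two-sided inequality $x_1<x_0<x_2$; the first inequality is nontrivial only when $(a_2-c_2)c_1>2a_1c_2$, explaining the hypothesis and the truncation at $0$ in the definition of $x_1$. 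The value $\lambda_0$ is then recovered via (\ref{42}) and lies in $\Lambda_0$.

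To upgrade $\Phi_0(\lambda_0)=0$ to existence of $\lambda_\epsilon$ for small $\epsilon$, one observes that $\bar v_2$ is strictly decreasing on $\Lambda_0$ and $v\mapsto(a_1-c_1v)/(1+v)$ is also strictly decreasing, so $\Phi_0$ is strictly monotone and in particular $\Phi_0'(\lambda_0)\neq 0$. Together with the uniform convergence $\Phi_\epsilon\to\Phi_0$ on a small closed interval about $\lambda_0$, the intermediate value theorem produces a root $\lambda_\epsilon$ with $\lambda_\epsilon\to\lambda_0$. The asserted limit (\ref{60}) then follows from (\ref{58}) applied with $\lambda=\lambda_\epsilon$ combined with continuous dependence on the parameter. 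The principal technical obstacle is precisely this continuous (and, to make $\Phi_\epsilon$ continuous, $C^1$-compatible) dependence of $v_\epsilon(\lambda,\cdot)$ on $\lambda$, which is not explicitly stated in Proposition 2 but can be extracted from the contraction-mapping argument since $\mathcal{L}_\epsilon$, $\mathcal{G}_\epsilon$ and $\mathcal{H}_\epsilon$ are all smooth in $\lambda$ and the relevant bounds are locally uniform in $\lambda$.
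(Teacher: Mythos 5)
Your construction is correct, but the final step differs from the paper's. The paper also reduces the problem to the scalar constraint equation $\mathcal{I}(\epsilon,\lambda)=0$ (your $\Phi_\epsilon$), but it solves it by the Implicit Function Theorem at $(\epsilon,\lambda)=(0,\bar\lambda_0)$: it extends $\mathcal{I}$ to $\epsilon\le 0$ by the limiting step profile, computes $\partial_\lambda\mathcal{I}$ as in (\ref{63}) using the formal derivative $\partial v_\epsilon/\partial\lambda$, and shows its limit as $\epsilon\to 0^+$ equals $\frac{(a_1+c_1)b_2(L-x_0)}{(1+\bar v_2)^2\sqrt{(a_2+c_2)^2-4b_2c_2\lambda}}\neq 0$, so that a root $\lambda_\epsilon\to\bar\lambda_0$ exists (and is locally unique). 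You instead exploit the strict monotonicity of the limit function $\Phi_0$ (correct: $\bar v_2(\lambda)$ is strictly decreasing by (\ref{42}) and $v\mapsto(a_1-c_1v)/(1+v)$ is strictly decreasing, so $\Phi_0$ is strictly increasing) and apply the intermediate value theorem for each small fixed $\epsilon>0$. This buys you something real: you only need continuity of $\lambda\mapsto v_\epsilon(\lambda,\cdot)$ (via the uniform contraction principle, since $\mathcal{L}_\epsilon^{-1}$, $\mathcal{G}_\epsilon$, $\mathcal{H}_\epsilon$ and $V_\epsilon$ depend continuously on $\lambda$ with bounds uniform on compact subsets of the admissible $\lambda$-interval), whereas the paper's route needs differentiability of $v_\epsilon$ in $\lambda$ and convergence of $\partial_\lambda v_\epsilon$, which it asserts with even less justification; on the other hand the IFT gives local uniqueness of $\lambda_\epsilon$, which the IVT does not (though the theorem does not claim it). Two small remarks: you do not actually need uniform convergence $\Phi_\epsilon\to\Phi_0$ on compacta --- pointwise convergence at two points $\lambda_0\pm\eta$ bracketing the root, together with continuity of $\Phi_\epsilon$ on $[\lambda_0-\eta,\lambda_0+\eta]$, already yields $\lambda_\epsilon\in(\lambda_0-\eta,\lambda_0+\eta)$ and hence $\lambda_\epsilon\to\lambda_0$; and to deduce (\ref{60}) with the varying parameter $\lambda_\epsilon$ you should note explicitly that the error bound $\sup_x|v_\epsilon(\lambda,x)-V_\epsilon(\lambda,x)|\le C_4\sqrt{\epsilon}$ of Proposition 2 holds with constants depending only on $\delta$, hence uniformly for $\lambda$ near $\lambda_0$, and that $V_\epsilon(\lambda_\epsilon,x)$ converges to the step profile with value $\bar v_2(\lambda_0)$ by continuity of $\bar v_2(\lambda)$ and of $V_0(\lambda,\cdot)$ in $\lambda$. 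The identification of $\bar v_2(\lambda_0)=\frac{a_1L}{c_1L-(a_1+c_1)x_0}$, of $\bar\lambda_0$, and the translation of $\bar v_2(\lambda_0)\in\bigl(\frac{a_2-c_2}{2c_2},\frac{a_2-c_2}{c_2}\bigr)$ into $x_0\in(x_1,x_2)$ agree with the paper's computation.
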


\begin{remark}
We note that the assumption $a_2-c_2>\frac{2a_1c_2}{c_1}$ is exactly the same as (\ref{12}) when $b_1=0$ and this condition is required to guarantee the existence of small amplitude bifurcating solutions.  In particular, we have that $x_1=0$ if $c_2< a_2\leq \frac{2a_1}{C}+c_2 $ and $x_1=\frac{(a_2-c_2)c_1-2a_1c_2}{(a_1+c_1)(a_2-c_2)}L $ if $a_2 > \frac{2a_1}{C}+c_2$; moreover $x_2<L$ for all $a_2>c_2$.  Similar as the stability analysis in Section 3, the limit assumption on $b_1$ is only made for the sake of mathematical simplicity.
\end{remark}

\begin{proof}
We shall apply the Implicit Function Theorem for the proof.  To this end, we define for all $\epsilon\in(-\delta,\delta)$ for $\delta$ being sufficiently small,
\begin{equation}\label{62}
\mathcal{I}(\epsilon,\lambda)=\int_0^L \frac{a_1-c_1v_\epsilon(\lambda,x)}{1+v_\epsilon(\lambda,x)}dx- \int_0^L \frac{b_1 \lambda}{(1+v_\epsilon(\lambda,x))^2} dx,
\end{equation}
where $\lambda \in \big(\bar \lambda_0-\delta , \bar \lambda_0+\delta \big)$ and $\bar \lambda_0$ is a positive constant to be determined.  For $\epsilon\leq 0$, we set $v_\epsilon(\lambda,x)=0$ if $x\in [0,x_0)$ and $v_\epsilon(\lambda,x)=\bar v_2(\lambda)$ if $x\in(x_0,L]$.  Then we have that
\[\mathcal{I}(\epsilon,\lambda)\equiv  a_1 x_0+\frac{L-x_0}{1+\bar v_2(\lambda)}\big(a_1-c_1\bar v_2(\lambda)\big),~\forall \epsilon \leq 0\]
On the other hand, for $\epsilon>0$, we have from (\ref{62}) that $\mathcal{I}(\epsilon,\lambda)$ is a smooth function of $\lambda$ and
\begin{equation}\label{63}
\frac{\partial \mathcal{I}(\epsilon,\lambda)}{\partial \lambda }=\int_0^L  \frac{2b_1 \lambda-(a_1+c_1)(1+v_\epsilon(\lambda,x))  }{(1+v_\epsilon(\lambda,x))^3} \frac{\partial v}{\partial \lambda}dx-\int_0^L \frac{b_1 }{(1+v_\epsilon(\lambda,x))^2} dx;
\end{equation}
moreover, we have from Proposition 2 that, $\lim_{\epsilon \rightarrow 0^+} \frac{\partial v}{\partial \lambda} \equiv 0$ if $x\in[0,x_0)$ and $\lim_{\epsilon \rightarrow 0^+} \frac{\partial v}{\partial \lambda} \equiv -\frac{b_2}{\sqrt{(a_2+c_2)^2-4b_2c_2\lambda}}$ if $x\in(x_0,L]$, where the convergence is pointwise in both cases.  By the Lebesgue Dominated Convergence Theorem, we see that $\lim_{\epsilon \rightarrow 0^+} \frac{\partial \mathcal{I}(\epsilon,\lambda)}{\partial \lambda }=\frac{(a_1+c_1)b_2(L-x_0)}{(1+\bar v_2)^2\sqrt{(a_2+c_2)^2-4b_2c_2\lambda}} \neq0$ for all $\lambda \neq \frac{(a_2+c_2)^2}{4b_2c_2}$.  Hence $\frac{\partial \mathcal{I}(\epsilon,\lambda)}{\partial \lambda }$ is continuous in a neighborhood of $(0, \bar \lambda_0)$ for all $\bar \lambda_0 \in \big(\frac{a_2}{b_2},\frac{(a_2+b_2)^2}{4b_2c_2} \big)$.  Therefore, according to the Implicit Function Theorem, in a small neighbourhood of $(\epsilon,\lambda)=(0,\lambda_0)$, there exists $\lambda=\lambda_\epsilon$ and $(v_\epsilon(\lambda_\epsilon,x),\lambda_\epsilon)$ is a solution to system (\ref{1}) such that $\lambda_\epsilon \rightarrow \bar \lambda_0$ as $\epsilon \rightarrow 0^+$.

To determine the values of $\bar v_0$ and $\bar \lambda_0$, we send $\epsilon$ to zero and conclude from (\ref{1}) and the Lebesgue Dominated Convergence Theorem that,
\begin{equation}
\left\{
\begin{array}{ll}
a_2-\frac{b_2 \bar \lambda_0}{1+\bar v_2(\bar \lambda_0)}-c_2\bar v_2(\bar \lambda_0)=0,\\
a_1x_0+\big(\frac{a_1-c_1\bar v_2(\bar\lambda_0)}{1+\bar v_2(\bar\lambda_0)} \big)(L-x_0)=0,
\end{array}
\right.
\end{equation}
then it follows from straightforward calculations that $\bar v_2(\bar \lambda_0)=\frac{a_1 L}{c_1L -(a_1+c_1)x_0}$ and $\bar \lambda_0=\frac{(a_2-c_2\bar v_2(\bar \lambda_0))(1+\bar v_2(\bar \lambda_0))}{b_2}$.  Moreover, since $\lambda_0 \in \big(\frac{a_2}{b_2},\frac{(a_2+c_2)^2}{4b_2c_2} \big)$, it is equivalent to have that $\bar v_2(\bar \lambda_0) \in (\frac{a_2-c_2}{2c_2},\frac{a_2-c_2}{c_2})$, which implies that $x_0\in(x_1,x_2)$ as in Theorem \ref{thm44} through straightforward calculations.  This verifies (\ref{60}) and (\ref{61}) and completes the proof of Theorem \ref{thm44}.
\end{proof}

\section{Conclusion and Discussion}
In this paper, we carry out local and global bifurcation analysis in (\ref{1})and establish the nonconstant positive solutions $v_\epsilon(\lambda_\epsilon,x)$ to this nonlinear problem.  It is shown that the bifurcating solutions exist for all $\epsilon>0$ being small--see (\ref{12}).  Though it might be well-known to some people and it may hold even for general reaction-diffusion systems, we show that all the local branches must be of pitch-fork type.  For the simplicity of calculations, we assume that $b_1=0$ and the stability of these bifurcating solutions are then determined.  In particular, we have that the bifurcating solutions are always unstable as long as $\epsilon$ is sufficiently small.  Finally, we constructed positive solutions to (\ref{1}) that have a single transition layer, where again we have assumed that $b_1=0$ for the sake of mathematical simplicity.  Our results complement \cite{LN2} on the structures of the nonconstant positive steady states of (\ref{1}) and help to improve our understandings about the original SKT competition system (\ref{2}).

We want to note that, though the assumption $b_1=0$ in Section 3 and Section 4 is made for the sake of mathematical simplicity, it is interesting question to answer whether or not (\ref{1}) admits solutions for all $B<A<C$ or $C<A<B$.  It is also an interesting and important question to probe on the global structure of all the bifurcation branches.  It is proved in \cite{SW} that the continuum of each bifurcation branch must satisfy one of three alternatives, and new techniques need to be developed in order to rule out or establish the compact global branches.  Moreover, more information on the limiting behavior of $v_\epsilon$ not only as $\epsilon$ approaches to zero, but some positive critical value which may also generates nontrivial patterns.  See \cite{LNY} for the work on a similar system.  The stability of the transition-layer solutions is yet another important and mathematically challenging problem that worths attention.  To this end, one needs to construct approximating solutions to (\ref{1}) of at least $\epsilon$--order.  Therefore, more information is required on the operator $\mathcal{L}_\epsilon$, for example, the limiting behavior of its second eigenvalue.

Our mathematical results are coherent with the phenomenon of competition induced species segregation.  We see from the limiting profile analysis of (\ref{2}) in \cite{LN2} that $u(1+v)$ converges to the positive constant $\lambda$ as $\rho_{12}\rightarrow \infty$ provided that $\rho_{12}$ and $d_1$ are comparable.  Then the existence of the transition layer in $v$ implies that $u=\frac{\lambda}{1+v}$ must be in the form of an inverted transition layer for $\epsilon$ being small.  These transition-layers solution can be useful in mathematical modelings of species segregation.  Therefore, the species segregation is formed through a mechanism cooperated by the diffusion rates $d_1$, $d_2$ and the cross-diffusion pressure $\rho_{12}$.  Eventually, the structure of $v_\epsilon(x)$ in (\ref{1}) provides essential understandings about the original system (\ref{2}).

\medskip
% The data information below will be filled by AIMS editorial staff
%Received xxxx 20xx; revised xxxx 20xx.
\medskip

\end{document}